\begin{document}
\newtheorem{theorem}{Theorem}
\newtheorem{proposition}[theorem]{Proposition}
\newtheorem{conjecture}[theorem]{Conjecture}
\def\theconjecture{\unskip}
\newtheorem{corollary}[theorem]{Corollary}
\newtheorem{lemma}[theorem]{Lemma}
\newtheorem{sublemma}[theorem]{Sublemma}
\newtheorem{observation}[theorem]{Observation}
\theoremstyle{definition}
\newtheorem{definition}{Definition}
\newtheorem{notation}[definition]{Notation}
\newtheorem{remark}[definition]{Remark}
\newtheorem{question}[definition]{Question}
\newtheorem{questions}[definition]{Questions}
\newtheorem{example}[definition]{Example}
\newtheorem{problem}[definition]{Problem}
\newtheorem{exercise}[definition]{Exercise}

\numberwithin{theorem}{section} \numberwithin{definition}{section}
\numberwithin{equation}{section}

\def\earrow{{\mathbf e}}
\def\rarrow{{\mathbf r}}
\def\uarrow{{\mathbf u}}
\def\varrow{{\mathbf V}}
\def\tpar{T_{\rm par}}
\def\apar{A_{\rm par}}

\def\reals{{\mathbb R}}
\def\torus{{\mathbb T}}
\def\heis{{\mathbb H}}
\def\integers{{\mathbb Z}}
\def\naturals{{\mathbb N}}
\def\complex{{\mathbb C}\/}
\def\distance{\operatorname{distance}\,}
\def\support{\operatorname{support}\,}
\def\dist{\operatorname{dist}\,}
\def\Span{\operatorname{span}\,}
\def\degree{\operatorname{degree}\,}
\def\kernel{\operatorname{kernel}\,}
\def\dim{\operatorname{dim}\,}
\def\codim{\operatorname{codim}}
\def\trace{\operatorname{trace\,}}
\def\Span{\operatorname{span}\,}
\def\dimension{\operatorname{dimension}\,}
\def\codimension{\operatorname{codimension}\,}
\def\nullspace{\scriptk}
\def\kernel{\operatorname{Ker}}
\def\ZZ{ {\mathbb Z} }
\def\p{\partial}
\def\rp{{ ^{-1} }}
\def\Re{\operatorname{Re\,} }
\def\Im{\operatorname{Im\,} }
\def\ov{\overline}
\def\eps{\varepsilon}
\def\lt{L^2}
\def\diver{\operatorname{div}}
\def\curl{\operatorname{curl}}
\def\etta{\eta}
\newcommand{\norm}[1]{ \|  #1 \|}
\def\expect{\mathbb E}
\def\bull{$\bullet$\ }

\def\xone{x_1}
\def\xtwo{x_2}
\def\xq{x_2+x_1^2}
\newcommand{\abr}[1]{ \langle  #1 \rangle}

\newcommand{\Norm}[1]{ \left\|  #1 \right\| }
\newcommand{\set}[1]{ \left\{ #1 \right\} }
\def\one{\mathbf 1}
\def\whole{\mathbf V}
\newcommand{\modulo}[2]{[#1]_{#2}}
\def \essinf{\mathop{\rm essinf}}
\def\scriptf{{\mathcal F}}
\def\scriptg{{\mathcal G}}
\def\scriptm{{\mathcal M}}
\def\scriptb{{\mathcal B}}
\def\scriptc{{\mathcal C}}
\def\scriptt{{\mathcal T}}
\def\scripti{{\mathcal I}}
\def\scripte{{\mathcal E}}
\def\scriptv{{\mathcal V}}
\def\scriptw{{\mathcal W}}
\def\scriptu{{\mathcal U}}
\def\scriptS{{\mathcal S}}
\def\scripta{{\mathcal A}}
\def\scriptr{{\mathcal R}}
\def\scripto{{\mathcal O}}
\def\scripth{{\mathcal H}}
\def\scriptd{{\mathcal D}}
\def\scriptl{{\mathcal L}}
\def\scriptn{{\mathcal N}}
\def\scriptp{{\mathcal P}}
\def\scriptk{{\mathcal K}}
\def\frakv{{\mathfrak V}}
\def\C{\mathbb{C}}
\def\R{\mathbb{R}}
\def\Rn{{\mathbb{R}^n}}
\def\Sn{{{S}^{n-1}}}
\def\M{\mathbb{M}}
\def\N{\mathbb{N}}
\def\Q{{\mathbb{Q}}}
\def\Z{\mathbb{Z}}
\def\F{\mathcal{F}}
\def\L{\mathcal{L}}
\def\S{\mathcal{S}}
\def\supp{\operatorname{supp}}
\def\dist{\operatorname{dist}}
\def\essi{\operatornamewithlimits{ess\,inf}}
\def\esss{\operatornamewithlimits{ess\,sup}}
\author{Mingming Cao}
\address{Mingming Cao \\
         School of Mathematical Sciences \\
         Beijing Normal University \\
         Laboratory of Mathematics and Complex Systems \\
         Ministry of Education \\
         Beijing 100875 \\
         People's Republic of China}
\email{m.cao@mail.bnu.edu.cn}

\author{Qingying Xue}
\address{Qingying Xue\\
        School of Mathematical Sciences\\
        Beijing Normal University \\
        Laboratory of Mathematics and Complex Systems\\
        Ministry of Education\\
        Beijing 100875\\
        People's Republic of China}
\email{qyxue@bnu.edu.cn}

\thanks{The second author was supported partly by NSFC
(No. 11471041), the Fundamental Research Funds for the Central Universities (NO. 2012CXQT09 and NO. 2014KJJCA10) and NCET-13-0065. \\ \indent Corresponding
author: Qingying Xue\indent Email: qyxue@bnu.edu.cn}

\keywords{Bi-parameter Littlewood-Paley $g_{\lambda}^{*}$-function; Probabilistic methods; Haar functions.}

\date{December 1, 2015.}
\title[Bi-parameter Littlewood-Paley $g_{\lambda}^{*}$-Function]{\textbf{On The Boundedness of Bi-parameter Littlewood-Paley} $g_{\lambda}^{*}$-function}
\maketitle

\begin{abstract}
Let $m,n\ge 1$ and $g_{\lambda_1,\lambda_2}^*$ be the bi-parameter Littlewood-Paley $g_{\lambda}^{*}$-function defined by
\begin{align*}
g_{\lambda_1,\lambda_2}^*(f)(x)
&= \bigg(\iint_{\R^{m+1}_{+}} \Big(\frac{t_2}{t_2 + |x_2 - y_2|}\Big)^{m \lambda_2}
\iint_{\R^{n+1}_{+}} \Big(\frac{t_1}{t_1 + |x_1 - y_1|}\Big)^{n \lambda_1} \\
&\quad\quad \times |\theta_{t_1,t_2} f(y_1,y_2)|^2 \frac{dy_1 dt_1}{t_1^{n+1}} \frac{dy_2 dt_2}{t_2^{m+1}} \bigg)^{1/2}, \quad\quad \quad\quad\lambda_1>1,\quad \lambda_2>1
\end{align*}
where $\theta_{t_1,t_2} f$ is a non-convolution kernel defined on $\mathbb{R}^{m+n}$. In this paper, we showed that the bi-parameter Littlewood-Paley function $g_{\lambda_1,\lambda_2}^*$ was bounded from $L^2(\R^{n+m})$ to $L^2(\R^{n+m})$. This was done by means of probabilistic methods and by using a new averaging identity over good double Whitney regions.
\end{abstract}


\section{Introduction}
\subsection{Background and motivation}
It is well known that $g_{\lambda}^*$-function originated in the work of Littlewood and Paley \cite{LP} in the 1930's. In 1961, Stein \cite{Stein1970} introduced and studied the following higher dimensional ($n \geq 2$) Littlewood-Paley $g_{\lambda}^*$-function:
$$ g_{\lambda}^*(f)(x)=\bigg(\iint_{\R^{n+1}_{+}} \Big(\frac{t}{t+|x-y|}\Big)^{n\lambda}
|\nabla P_t f(y,t)|^2 \frac{dy dt}{t^{n-1}}\bigg)^{1/2},\quad\quad \lambda > 1$$
where $P_t f(y,t)=p_t*f(x)$, $p_t(y)=t^{-n}p(y/t)$ denotes the Poisson kernel and
$\nabla =(\frac{\partial}{\partial y_1},\ldots,\frac{\partial}{\partial y_n},\frac{\partial}{\partial t})$. It plays important roles in harmonic analysis and other fields. With much greater difficulty, Stein \cite{Stein1961} showed that $\big\| g_{\lambda}^*(f) \big\|_{L^p(\Rn)}$ and $\big\| f \big\|_{L^p(\Rn)}$ are equivalent norms for any $1< p < \infty$. Moreover, in \cite{Stein1961}, Stein also proved that if $\lambda > 2$, then $g_{\lambda}^*$ is of weak type $(1,1)$, and is of strong type $(p,p)$ for $1 < p < \infty$. In the same paper, Stein pointed out that weak $(1,1)$ boundedness doesn't holds for $1<\lambda\le 2.$ In 1970, as a replacement of weak $(1,1)$ bounds for $1<\lambda<2$, Fefferman \cite{F} considered the end-point weak $(p,p)$ estimates of $g_{\lambda}^*$-function when $p>1$ and $\lambda=2/p$.

Recently, Cao, Xue ad Li \cite{CLX} gave a characterization of two weight norm inequalities for the classical $g_\lambda^*$-function. The first step of the proof is to reduce the case to good Whitney regions. In addition, the random dyadic grids and martingale differences decomposition are used. The core of the proof is the construction of stopping cubes, which is a modern and effective technique to deal with two weight problems. The stopping cubes were first introduced to handle two weight boundedness of Hilbert transform \cite{Lacey1}, \cite{Lacey2}. Then, some related consequences and applications were given, as demonstrated in \cite{CLX}, \cite{LL} and \cite{LSTS}. Still more recently, Cao and Xue \cite{CX} established a local $Tb$ theorem for the non-homogeneous Littlewood-Paley $g_{\lambda}^{*}$-function with non-convolution type kernels and upper power bound measure $\mu$. It was the first time to investigate $g_\lambda^*$-function in the simultaneous presence of three attributes : local, non-homogeneous and $L^p$-testing condition. 

When it comes to the multi-parameter harmonic analysis, there is a very large existing theory. In 2012, a dyadic representation theorem for bi-parameter singular integrals was presented by Martikainen \cite{M2012} and a new version of $T1$ theorem on the product space was also established. In 2014, Hyt\"{o}nen and Martikainen \cite{HM} proved a non-homogeneous version of $T1$ theorem for certain bi-parameter singular integral operators. Moreover, they discussed
the related non-homogeneous Journ\'{e}¡¯s lemma and product BMO theory with more general type of measures. Still in 2014, a class of bi-parameter kernels and related vertical square functions in the upper half-space were first introduced by Martikainen \cite{M2014}. Using dyadic probabilistic techniques, the author gave a criterion for the $L^2(\R^{n+m})$ boundedness of these square functions. It is worth pointing out that the kernels are assumed to satisfy some estimates, including a natural size condition, a H\"{o}lder estimate and two symmetric mixed H\"{o}lder and size estimates, the mixed Carleson and size conditions, the mixed Carleson and H\"{o}lder estimates and a bi-parameter Carleson condition. Moreover, it should be noted that the bi-parameter Carleson condition is necessary for the square function to be bounded on $L^2(\R^{n+m})$.

Motivated by the above works, in this paper, we keep on studying the Littlewood Paley $g_\lambda^*$-function but in the bi-parameter setting. First, we introduce the definition of the bi-parameter Littlewood Paley $g_\lambda^*$-function.

\begin{definition}\label{definition 1.1}
Let $\lambda_1,\lambda_2 >1$, for any $x=(x_1,x_2) \in \R^{n+m}$, the bi-parameter Littlewood-Paley $g_\lambda^*$-function is defined by
\begin{align*}
g_{\lambda_1,\lambda_2}^*(f)(x)
&:= \bigg(\iint_{\R^{m+1}_{+}} \Big(\frac{t_2}{t_2 + |x_2 - y_2|}\Big)^{m \lambda_2}
\iint_{\R^{n+1}_{+}} \Big(\frac{t_1}{t_1 + |x_1 - y_1|}\Big)^{n \lambda_1} \\
&\quad\quad\quad \times |\theta_{t_1,t_2} f(y_1,y_2)|^2 \frac{dy_1 dt_1}{t_1^{n+1}} \frac{dy_2 dt_2}{t_2^{m+1}} \bigg)^{1/2},
\end{align*}
where
$\theta_{t_1,t_2} f(y_1,y_2) = \iint_{\R^{n+m}} K_{t_1,t_2}(y_1,y_2,z_1,z_2)f(z_1,z_2) dz_1 \ dz_2.$
\end{definition}

Under certain structural assumptions, we will prove the  $L^2(\R^{n+m})$ boundedness of $g_{\lambda_1,\lambda_2}^*$, in other words, the following inequality,
$$
\big\| g_{\lambda_1,\lambda_2}^*(f) \big\|_{L^2(\R^{n+m})} \lesssim \big\| f \big\|_{L^2(\R^{n+m})}.
$$

Compared to the bi-parameter vertical square function, the bi-parameter Littlewood Paley $g_\lambda^*$-function is significantly much more difficult to be dealt with. Actually, in bi-parameter case, additional integrals make most of the corresponding estimates more complicated. We could not use the assumptions in \cite{M2014} directly, since addition terms appears in the Definition \ref{definition 1.1}. In fact, we will use much more weaker conditions than the conditions used in \cite{M2014} (see assumptions in the following subsection).  Unlike the one-parameter case and two-weight case \cite{CLX}, the proof of bi-parameter  $g_\lambda^*$-function does not involve the stopping cubes and martingale differences decomposition. In fact, the decomposition associated with Haar function in $\Rn$ provides a foundation for our analysis. And modern techniques, including probabilistic methods and dyadic analysis, will be used efficiently again. They were first used by Martikainen \cite{M2012} in the study of the bi-parameter Calder\'{o}n-Zygmund integrals and later appeared in \cite{M2014}. For more applications, one can refer \cite{HM}, \cite{Ou}. However, our result is based on a simple new averaging identity over good double Whitney regions.

\subsection{Assumptions and Main result}
To state our main results, we need to give some appropriate assumptions. From now on, we always assume that
$\alpha, \beta>0$. We use, for minor convenience, $\ell^\infty$ metrics on $\Rn$ and $\R^m$.

\vspace{0.3cm}
\noindent\textbf{Assumption 1 (Standard estimates).} The kernel $K_{t_1,t_2}: \R^{n+m} \times \R^{n+m} \rightarrow \C$ is assumed to satisfy the following estimates:
\begin{enumerate}
\item [(1)] Size condition :
$$ |K_{t_1,t_2}(x,y)|
\lesssim \frac{t_1^{\alpha}}{(t_1 + |x_1 - y_1|)^{n+\alpha}} \frac{t_2^{\beta}}{(t_2 + |x_2 - y_2|)^{m+\beta}}.$$
\item [(2)] H\"{o}lder condition :
\begin{align*}
|K_{t_1,t_2}(x,y) -& K_{t_1,t_2}(x,(y_1,y_2')) - K_{t_1,t_2}(x,(y_1',y_2)) + K_{t_1,t_2}(x,y')| \\
& \lesssim \frac{|y_1 - y_1'|^{\alpha}}{(t_1 + |x_1 - y_1|)^{n+\alpha}} \frac{|y_2 - y_2'|^{\beta}}{(t_2 + |x_2 - y_2|)^{m+\beta}},
\end{align*}
whenever $|y_1 -y_1'| < t_1/2$ and $|y_2 - y_2'| < t_2/2$.
\item [(3)] Mixed H\"{o}lder and size conditions :
$$ |K_{t_1,t_2}(x,y) - K_{t_1,t_2}(x,(y_1,y_2'))|
\lesssim \frac{t_1^{\alpha}}{(t_1 + |x_1 - y_1|)^{n+\alpha}} \frac{|y_2 - y_2'|^{\beta}}{(t_2 + |x_2 - y_2|)^{m+\beta}},$$
whenever $|y_2 -y_2'| < t_2/2$ and
$$ |K_{t_1,t_2}(x,y) - K_{t_1,t_2}(x,(y_1',y_2))|
\lesssim \frac{|y_1 - y_1'|^{\alpha}}{(t_1 + |x_1 - y_1|)^{n+\alpha}} \frac{t_2^{\beta}}{(t_2 + |x_2 - y_2|)^{m+\beta}},$$
whenever $|y_1 - y_1'| < t_1/2$.
\end{enumerate}
\noindent\textbf{Assumption 2 (Carleson condition $\times$ Standard estimates).}
If $I \subset \Rn$ is a cube with side length $\ell(I)$, we define the associated Carleson box by $\widehat{I}=I \times (0,\ell(I))$. We assume the following conditions : For every cube $I \subset \Rn$ and $J \subset \R^m$, there holds that
\begin{enumerate}
\item [(1)] Combinations of Carleson and size conditions :
\begin{align*}
\bigg( \iint_{\widehat{I}} \int_{\Rn} \bigg| \int_{I} K_{t_1,t_2}(x-y,z_1,z_2) dz_1 \bigg|^2 \Big(\frac{t_1}{t_1 + |y_1|}\Big)^{n \lambda_1} &\frac{dy_1dx_1 dt_1}{t_1^{n+1}} \bigg)^{\frac{1}2} \\
&\lesssim |I|^{\frac12}\frac{ t_2^{\beta}}{(t_2 + |x_2 - y_2 - z_2|)^{m + \beta}}
\end{align*}
and
\begin{align*}
\bigg( \iint_{\widehat{J}} \int_{\R^m} \bigg| \int_{J} K_{t_1,t_2}(x-y,z_1,z_2) dz_2 \bigg|^2 \Big(\frac{t_2}{t_2 + |y_2|}\Big)^{n \lambda_1} &\frac{dy_2dx_2 dt_2}{t_2^{m+1}} \bigg)^{\frac12} \\
&\lesssim |J|^{\frac12} \frac{t_1^{\alpha}}{(t_1 + |x_1 - y_1 - z_1|)^{n + \alpha}}.
\end{align*}
\item [(2)] Combinations of Carleson and H\"{o}lder conditions :
\begin{align*}
\bigg( \iint_{\widehat{I}} \int_{\Rn} \bigg| \int_{I} [ K_{t_1,t_2}(x-y,z_1,z_2) - K_{t_1,t_2}(x-y,z_1,&z_2') ] dz_1 \bigg|^2 \Big(\frac{t_1}{t_1 + |y_1|}\Big)^{n \lambda_1} \frac{dy_1dx_1 dt_1}{t_1^{n+1}}\bigg)^{\frac12} \\
&\lesssim |I|^{\frac12} \frac{|z_2 - z_2'|^{\beta}}{(t_2 + |x_2 - y_2 - z_2|)^{m + \beta}},
\end{align*}
\noindent whenever $|z_2 - z_2'| < t_2/2$. And
\begin{align*}
\bigg( \iint_{\widehat{J}} \int_{\R^m} \bigg| \int_{J} [ K_{t_1,t_2}(x-y,z_1,z_2) - K_{t_1,t_2}(x-y,&z_1',z_2) ] dz_2 \bigg|^2 \Big(\frac{t_2}{t_2 + |y_2|}\Big)^{m \lambda_2} \frac{dy_2dx_2 dt_2}{t_2^{m+1}} \bigg)^{\frac12} \\
&\lesssim |J|^{\frac12} \frac{|z_1 - z_1'|^{\alpha}}{(t_1 + |x_1 - y_1 - z_1|)^{n + \alpha}},
\end{align*}
whenever $|z_1 - z_1'| < t_1/2$.
\end{enumerate}
\noindent\textbf{Assumption 3 (Bi-parameter Carleson condition).}
Let $\mathcal{D}=\mathcal{D}_n \times \mathcal{D}_m$, where $\mathcal{D}_n$ is a dyadic grid in $\Rn$ and $\mathcal{D}_m$ is a dyadic grid in $\R^m$. For $I \in \mathcal{D}_n$, let $W_I = I \times (\ell(I)/2, \ell(I))$ be the associated Whitney region. Denote $n_1=n,n_2=m$ and
\begin{align*}
C_{I J}^{\mathcal{D}} &= \iint_{W_J} \iint_{W_I} \iint_{\R^{n+m}}|\theta_{t_1,t_2} \mathbf{1}(y_1, y_2)|^2
\bigg[\prod_{i=1}^2\Big(\frac{t_i}{t_i + |x_i - y_i|}\Big)^{n_i \lambda_i} \bigg] \frac{dy_1dy_2}{t_1^nt_2^m}
\frac{dx_1 dt_1}{t_1} \frac{dx_2 dt_2}{t_2} .
\end{align*}
We assume the following $bi$-$parameter \ Carleson \ condition$: For every $\mathcal{D}=\mathcal{D}_n \times \mathcal{D}_m$ there holds that
\begin{equation}\label{Car-condition}
\sum_{\substack{I \times J \in \mathcal{D} \\ I \times J \subset \Omega}} C_{I J}^{\mathcal{D}} \lesssim |\Omega|
\end{equation}
for all sets $\Omega \subset \R^{n+m}$ such that $|\Omega| < \infty$ and such that for every $x \in \Omega$ there exists $I \times J \in \mathcal{D}$ so that $x \in I \times J \subset \Omega$.

We now formulate the main result of this paper as follows.
\begin{theorem}
Let $\lambda_1,\lambda_2 >2$, $0 < \alpha \leq n(\lambda_1 -2)/2$ and $0 < \beta \leq m(\lambda_2 -2)/2$. Assume that the kernel $K_{t_1,t_2}$ satisfies the Assumptions 1-3. Then there holds that
$$\big\| g_{\lambda_1,\lambda_2}^*(f) \big\|_{L^2(\R^{n+m})} \lesssim \big\| f \big\|_{L^2(\R^{n+m})},$$
where the implied constant depends only on the assumptions.
\end{theorem}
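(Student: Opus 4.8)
The plan is to follow the dyadic-probabilistic framework of Martikainen's work on bi-parameter vertical square functions, modified to absorb the extra $g_\lambda^*$-type integrals in the two variables. First I would reduce the operator to a dyadic model. Expand $f = \sum_{I\in\mathcal D_n}\sum_{J\in\mathcal D_m} \Delta_I^1\Delta_J^2 f$ using the bi-parameter Haar decomposition, where $\mathcal D = \mathcal D_n\times\mathcal D_m$ are \emph{random} dyadic grids (independent parameters $\omega_1,\omega_2$). By the standard $L^2$-orthogonality of Haar projections and Fubini, it suffices to control $\expect_{\omega}\big\|g_{\lambda_1,\lambda_2}^*(f)\big\|_{L^2}^2$, and by further expanding the square of the $g_\lambda^*$ integrand one is led to bound, for each pair of Whitney regions, the bilinear form $\sum \langle \theta_{t_1,t_2}(\Delta_I^1\Delta_J^2 f), \theta_{t_1,t_2}(\Delta_{I'}^1\Delta_{J'}^2 f)\rangle$ integrated against the kernel weights. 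The key structural device, replacing stopping cubes, is the \emph{new averaging identity over good double Whitney regions}: one writes the full $(y_1,t_1,y_2,t_2)$ integral as an average over dyadic grids of the contribution from good Whitney regions $W_I\times W_J$, the point being that the probability that a fixed point is ``bad'' in either parameter can be made small, so the bad part is absorbed by a fraction of the operator norm on the left.

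Next I would carry out the core estimates Whitney-region by Whitney-region. For each $I\in\mathcal D_n$, $J\in\mathcal D_m$ with $(y_1,t_1)\in W_I$, $(y_2,t_2)\in W_J$, the decisive quantities are the ``inner products'' $\langle\theta_{t_1,t_2}\mathbf 1, h_I^1 h_J^2\rangle$-type terms and their localizations, which are controlled by the three families of assumptions: Assumption 1 (size, Hölder, mixed Hölder/size) handles the ``diagonal'' and near-diagonal interactions where both $I,I'$ (resp. $J,J'$) are comparable and close; Assumption 2 (Carleson $\times$ standard estimates) handles the ``mixed'' terms where one parameter is diagonal and the other is a long-range paraproduct-type interaction — here the Carleson box integrals in the hypotheses are exactly what is needed after summing a geometric series in the scale ratio; and Assumption 3 (bi-parameter Carleson) handles the full paraproduct term $\theta_{t_1,t_2}\mathbf 1$, where one invokes the Carleson embedding theorem on the product space via the condition \eqref{Car-condition} applied to the sets $\Omega$ arising as unions of maximal cubes. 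The restriction $\lambda_i>2$ together with $\alpha\le n(\lambda_1-2)/2$, $\beta\le m(\lambda_2-2)/2$ is used precisely to make the $t_i$-integrals $\int \big(\frac{t_i}{t_i+|x_i-y_i|}\big)^{n_i\lambda_i}\frac{dx_i}{t_i^{n_i}}$ converge after pairing with the $t_i^{\alpha}$ (resp. $t_i^{\beta}$) gain from the kernel and the $|y_i-y_i'|^{\alpha}$ (resp. ${}^{\beta}$) Hölder gains from the Haar cancellation — this is where the extra $g_\lambda^*$ integrals, absent in the vertical square function, cost something and force the quantitative relation between $\lambda_i$ and $\alpha,\beta$.

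Assembling the pieces: in each parameter separately one has, after using the Haar cancellation, an estimate of the form (size/Hölder gain) $\times$ (decay in $\log_2(\ell(I)/\ell(I'))$), and summing these geometrically decaying series in both parameters — together with $\sum_I\|\Delta_I^1\Delta_J^2 f\|_2^2 = \|f\|_2^2$ — yields the claimed bound $\|g_{\lambda_1,\lambda_2}^*(f)\|_2\lesssim\|f\|_2$ for the good part. Finally, de-randomize: the good part bound holds uniformly in $\omega$, and the bad part is at most (small constant)$\cdot\|g_{\lambda_1,\lambda_2}^*\|_{L^2\to L^2}\|f\|_2$ after taking expectations, so it can be moved to the left and the a priori finite operator norm is bounded.

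I expect the main obstacle to be the genuinely bi-parameter mixed terms — those in which the first-parameter interaction is of paraproduct type while the second is a short-range (or Hölder) interaction, and vice versa — because there the two $g_\lambda^*$ weights $\big(\frac{t_1}{t_1+|x_1-y_1|}\big)^{n\lambda_1}$ and $\big(\frac{t_2}{t_2+|x_2-y_2|}\big)^{m\lambda_2}$ interact with two different types of kernel estimates simultaneously, and one must verify that the hypotheses of Assumption 2 (which already build in one $g_\lambda^*$ weight and one Carleson box integration) are robust enough to close the estimate after the remaining $t$- and $x$-integrations. Keeping track of which averaging identity (single- vs. double-Whitney) is invoked at each stage, so that the badness probabilities in the two parameters are handled independently and no circularity creeps into the absorption argument, will also require care.
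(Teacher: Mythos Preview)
Your overall architecture is right and matches the paper: random dyadic grids, Haar expansion of $f$, reduction to good Whitney regions $W_{I_2}\times W_{J_2}$, then a case split according to the relative sizes $\ell(I_1)\lessgtr\ell(I_2)$, $\ell(J_1)\lessgtr\ell(J_2)$, with the $\geq$ cases further decomposed into separated/adjacent/nested and the nested/nested piece handled via Assumption~3 and Carleson embedding. Your identification of where $\alpha\le n(\lambda_1-2)/2$ enters is also essentially correct: it is used in a pointwise lemma (the paper's Lemma~3.2) bounding $\int_{\Rn}\big(\int_{I_1}(t_1+|x_1-y_1-z_1|)^{-n-\alpha}dz_1\big)^2\big(\tfrac{t_1}{t_1+|y_1|}\big)^{n\lambda_1}\tfrac{dy_1}{t_1^n}$.

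There is, however, a genuine confusion in your reduction step. You describe the ``averaging identity over good double Whitney regions'' but then immediately revert to an \emph{absorption} argument (``the bad part is at most a small constant times the operator norm\dots move it to the left\dots the a priori finite operator norm is bounded''). These are two different mechanisms, and the paper uses only the first. Because the spatial position of $I+\beta_n$ and its goodness are \emph{independent} random variables, one has the exact identity
\[
\big\|g_{\lambda_1,\lambda_2}^*(f)\big\|_{L^2}^2
=\frac{1}{\pi_{\mathrm{good}}^n\,\pi_{\mathrm{good}}^m}\,
\mathbb{E}_{\beta_n,\beta_m}\sum_{I_2,J_2:\,\mathrm{good}}\iint_{W_{J_2}}\iint_{W_{I_2}}\cdots,
\]
so there is no bad part to absorb and no need to assume any a priori finiteness of the operator norm. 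Your absorption plan would require such an a priori bound, which you do not supply (and which is not automatic for $g_\lambda^*$ with a general non-convolution kernel); the identity bypasses this entirely.

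A second, more minor, divergence: you propose to expand the square into the bilinear form $\sum\langle\theta_{t_1,t_2}\Delta_I\Delta_J f,\theta_{t_1,t_2}\Delta_{I'}\Delta_{J'} f\rangle$ and then estimate off-diagonal pairings. The paper does not do this; it applies Minkowski's inequality in the $(y,t)$ integrals to get $\big(\sum_{I_1,J_1}|f_{I_1J_1}|\,\mathcal P_{I_1,J_1}(x,t)\big)^2$, proves the pointwise coefficient bound $\mathcal P_{I_1,J_1}(x,t)\lesssim A_{I_1I_2}A_{J_1J_2}|I_2|^{-1/2}|J_2|^{-1/2}$ (via Lemma~3.2 and the kernel assumptions), and then sums using the Schur-type estimate $\sum_{I_2}\big(\sum_{I_1}A_{I_1I_2}x_{I_1}\big)^2\lesssim\sum x_{I_1}^2$. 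This is cleaner than a full bilinear expansion and avoids tracking four cubes at once.
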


\begin{remark}In section $\ref{Sec-necessity}$, we shall show that the bi-parameter Carleson condition is necessary for $g_{\lambda_1,\lambda_2}^*$-function to be bounded on $L^2(\R^{n+m})$. Moreover, Assumption 2 and Assumption 3 are much weaker than the similar conditions used in \cite{M2014}, since here two terms (both less than one) were added and more integrals related to $y_1$ or $y_2$ were used in our assumptions.
\end{remark}
\section{The Probabilistic Reduction}
In this section, our goal is to simplify the proof of the main result. First, we recall the definitions of random dyadic grids, good/bad cubes, Haar function on $\Rn$ which can be found in \cite{H}, \cite{M2012} and \cite{NTV}.
\subsection{Random Dyadic Grids}Let $\beta_n = \{ \beta_n^j\}_{j \in \Z}$, where
$\beta_n^j \in \{0,1\}^n$. Let $\mathcal{D}_n^0$ be the standard dyadic grids on $\Rn$. We define the new dyadic
grids in $\Rn$ by 
$$ \mathcal{D}_n = \Big\{I + \beta_n; I \in \mathcal{D}_n^0 \Big\} := \Big\{I + \sum_{j:2^{-j}<\ell(I)} 2^{-j} \beta_n^j; I \in \mathcal{D}_n^0 \Big\}.$$
Similarly, we can define the dyadic grids $\mathcal{D}_m$ in $\R^m$. There is a natural product probability structure on
$(\{0, 1\}^n)^{\Z}$ and $(\{0, 1\}^m)^{\Z}$. Therefore, we have independent random dyadic grids $\mathcal{D}_n$ and $\mathcal{D}_m$, respectively.

\subsection{Good and Bad Cubes.}
A cube $I \in \mathcal{D}_n$ is said to be $bad$ if there exists a $J \in \mathcal{D}_n$ with $\ell(J) \geq 2^r \ell(I)$ such that
$dist(I,\partial J) \leq \ell(I)^{\gamma_n} \ell(J)^{1-\gamma_n}$. Otherwise, $I$ is called $good$.
Here $r \in \Z_+$ and $\gamma_n \in (0,\frac12)$ are given parameters. 
Roughly speaking, a dyadic cube $I$ will be bad if it is relatively close to the boundary of a much bigger dyadic cube. Denote
$\pi_{good}^n = \mathbb{P}_{\beta_n}(I + \beta_n \ \text{is \ good}) = \mathbb{E}_{\beta_n}(\mathbf{1}_{good}(I+\beta_n))$.
Then $\pi_{good}^n$ is independent of $I \in \mathcal{D}_n^0$, and the parameter $r$ is a fixed constant so that $\pi_{good}^n,\pi_{good}^m > 0$.

Throughout this article, we take $\gamma_n = \frac{\alpha}{2(n+\alpha)}$, where $\alpha > 0$ appears in the kernel estimates. It is important to observe that the position and goodness of a cube $I \in \mathcal{D}_n^0$ are independent. Indeed, according to the definition, the spatial position of
$$ I + \beta_n = I + \sum_{j:2^{-j} < \ell(I)} 2^{-j} \beta_n^j $$ depends only on $\beta_n^j$ for $2^{-j} < \ell(I)$. On the other hand, the relative position of $I + \beta_n$ with respect to a bigger cube
$$ J + \beta_n = I + \sum_{j:2^{-j}<\ell(I)} 2^{-j} \beta_n^j + \sum_{j:\ell(I) \leq 2^{-j} < \ell(J)} 2^{-j} \beta_n^j$$
depends only on $\beta_n^j$ for $\ell(I) \leq 2^{-j} < \ell(J)$. Thus, the position and goodness of $I + \beta_n$ are independent.

\subsection{Haar functions}
In order to decompose a function $f \in L^2$, we introduce the definition of the Haar functions on $\Rn$.
Let $h_I$ be an $L^2$ normalized Haar function related to $I \in \mathcal{D}_n$, where $\mathcal{D}_n$ is a dyadic grid on $\Rn$. With this we mean that $h_I$, $I = I_1 \times \cdots \times I_n$, is one of the $2^n$ functions $h_I^\eta$, $\eta={\eta_1,\ldots,\eta_n} \in \{0, 1\}^n$, defined by
$$ h_I^\eta = h_{I_1}^{\eta_1} \otimes \cdots \otimes h_{I_n}^{\eta_n},$$
where $h_{I_i}^0 = |I_i|^{-1/2} \mathbf{1}_{I_i}$ and $h_{I_i}^1 = |I_i|^{-1/2}(\mathbf{1}_{I_{i,l}}-\mathbf{1}_{I_{i,r}})$ for every $i = 1, \ldots, n$. Here $I_{i,l}$ and $I_{i,r}$ are the left and right halves of the interval $I_i$ respectively. If $\eta \neq 0$, the Haar function is cancellative : $\int_\Rn h_I = 0$. All the cancellative Haar functions form an orthonormal basis of $L^2(\Rn)$. If $a \in L^2(\Rn)$, we may thus write
$$ a = \sum_{I \in \mathcal{D}_n} \sum_{\eta \in \{0, 1 \}^n \setminus \{0\}} \langle a, h_I^\eta \rangle h_I^\eta.$$
However, we suppress the finite $\eta$ summation and just write $a = \sum_I \langle a, h_I \rangle h_I.$ Using the corresponding product basis, we may expand a function $f$ defined in $\R^{n+m}$ in the following way:
$$ f = \sum_{I,J} f_{I J} h_{I \times J}
    := \sum_{I,J} \langle f, h_I \otimes h_J \rangle h_I \otimes h_J.$$
\subsection{Averaging over Good Whitney Regions}
Let $f \in L^2(\R^{n+m})$. Let $I_1, I_2 \in \mathcal{D}_n$ and $J_1, J_2 \in \mathcal{D}_m$.
Note that the position and goodness of $I + \beta_n$ are independent. Therefore, one can write,
\begin{align*}
&\big\| g_{\lambda_1,\lambda_2}^*(f) \big\|_{L^2(\R^{n+m})}^2 \\
&= \iint_{\R^{m+1}_{+}} \iint_{\R^{n+1}_{+}} \iint_{\R^{n+m}} |\theta_{t_1,t_2} f(x-y)|^2
\Big(\frac{t_1}{t_1 + |y_1|}\Big)^{n \lambda_1} \\
&\quad\quad \times \Big(\frac{t_2}{t_2 + |y_2|}\Big)^{m \lambda_2} \frac{dy_1}{t_1^n}
\frac{dy_2}{t_2^m} \frac{dx_1 dt_1}{t_1} \frac{dx_2 dt_2}{t_2} \\
&= \frac{1}{\pi_{good}^n} \frac{1}{\pi_{good}^m} \mathbb{E}_{\beta_n,\beta_m} \sum_{I_2,J_2: good} \iint_{W_{J_2}} \iint_{W_{I_2}} \iint_{\R^{n+m}} |\theta_{t_1,t_2} f(x-y)|^2 \\
&\quad\quad \times \Big(\frac{t_1}{t_1 + |y_1|}\Big)^{n \lambda_1} \Big(\frac{t_2}{t_2 + |y_2|}\Big)^{m \lambda_2} \frac{dy_1}{t_1^n} \frac{dy_2}{t_2^m} \frac{dx_1 dt_1}{t_1} \frac{dx_2 dt_2}{t_2} \\
&= \frac{1}{\pi_{good}^n} \frac{1}{\pi_{good}^m} \mathbb{E}_{\beta_n,\beta_m} \sum_{I_2,J_2: good} \iint_{W_{J_2}} \iint_{W_{I_2}} \iint_{\R^{n+m}} \Big| \sum_{I_1,J_1} f_{I_1 J_1} \theta_{t_1,t_2} h_{I_1 \times J_1}(x-y) \Big|^2 \\
&\quad\quad \times \Big(\frac{t_1}{t_1 + |y_1|}\Big)^{n \lambda_1} \Big(\frac{t_2}{t_2 + |y_2|}\Big)^{m \lambda_2} \frac{dy_1}{t_1^n} \frac{dy_2}{t_2^m} \frac{dx_1 dt_1}{t_1} \frac{dx_2 dt_2}{t_2}.
\end{align*}
Consequently, we are reduced to bound the sum
\begin{align*}
\mathcal{G}:= &\sum_{I_2,J_2: good} \iint_{W_{J_2}} \iint_{W_{I_2}} \iint_{\R^{n+m}} \Big| \sum_{I_1,J_1} f_{I_1 J_1} \theta_{t_1,t_2} h_{I_1 \times J_1}(x-y) \Big|^2 \\
&\quad\quad\quad  \times \Big(\frac{t_1}{t_1 + |y_1|}\Big)^{n \lambda_1} \Big(\frac{t_2}{t_2 + |y_2|}\Big)^{m \lambda_2} \frac{dy_1}{t_1^n} \frac{dy_2}{t_2^m} \frac{dx_1 dt_1}{t_1} \frac{dx_2 dt_2}{t_2}.
\end{align*}
Furthermore, we can carry out the decomposition
$$ \mathcal{G} \lesssim \mathcal{G}_{<,<} + \mathcal{G}_{<,\geq} + \mathcal{G}_{\geq,<} + \mathcal{G}_{\geq,\geq},$$
where
\begin{align*}
\mathcal{G}_{<,<}:= &\sum_{I_2,J_2: good} \iint_{W_{J_2}} \iint_{W_{I_2}} \iint_{\R^{n+m}} \Big| \sum_{\substack{I_1,J_1 \\ \ell(I_1) < \ell(I_2) \\ \ell(J_1) < \ell(J_2)}} f_{I_1 J_1} \theta_{t_1,t_2} h_{I_1 \times J_1}(x-y) \Big|^2 \\
&\quad\quad\quad  \times \Big(\frac{t_1}{t_1 + |y_1|}\Big)^{n \lambda_1} \Big(\frac{t_2}{t_2 + |y_2|}\Big)^{m \lambda_2} \frac{dy_1}{t_1^n} \frac{dy_2}{t_2^m} \frac{dx_1 dt_1}{t_1} \frac{dx_2 dt_2}{t_2},
\end{align*}
and the others are completely similar.

Sequentially, it is enough to focus on estimating the four pieces: $\mathcal{G}_{<,<}$, $\mathcal{G}_{<,\geq}$, $\mathcal{G}_{\geq,<}$, $\mathcal{G}_{\geq,\geq}$ in the following sections.
\section{The Case : $\ell(I_1) < \ell(I_2)$ and $\ell(J_1) < \ell(J_2)$}

For the sake of convenience, we first present two key lemmas, which will be used later.
\begin{lemma}[\cite{LL}, \cite{M2014}]\label{A-I1-I2}
Let $$A_{I_1 I_2}=\frac{\ell(I_1)^{\alpha/2} \ell(I_2)^{\alpha/2}}{D(I_1,I_2)^{n+\alpha}} |I_1|^{1/2} |I_2|^{1/2},$$
where the long distance $D(I_1,I_2)=\ell(I_1) + \ell(I_2) + d(I_1,I_2)$, $I_1,I_2 \in \mathcal{D}_n$ and $\alpha > 0$. Then, for any $x_{I_1}, y_{I_2} \geq 0$, we have the following estimate,
$$\Big( \sum_{I_1,I_2} A_{I_1 I_2} x_{I_1} y_{I_2} \Big)^2 \lesssim \sum_{I_1} x_{I_1}^2
\times \sum_{I_2} y_{I_2}^2.$$
In particular, there holds that:
$$ \sum_{I_2} \Big[ \sum_{I_1} A_{I_1 I_2} x_{I_1} \Big]^2 \lesssim \sum_{I_1}x_{I_1}^2. $$
\end{lemma}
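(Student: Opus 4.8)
The plan is to prove the bilinear estimate and then obtain the "in particular" statement by specializing $y_{I_2}$. For the bilinear estimate, the natural route is a Schur-type test: it suffices to exhibit a positive weight $(w_I)_{I\in\mathcal D_n}$ such that
$$
\sum_{I_2} A_{I_1 I_2}\,\frac{w_{I_2}}{w_{I_1}} \lesssim 1
\qquad\text{and}\qquad
\sum_{I_1} A_{I_1 I_2}\,\frac{w_{I_1}}{w_{I_2}} \lesssim 1,
$$
with constants uniform in $I_1$ (resp. $I_2$). Since $A_{I_1 I_2}$ is symmetric, a symmetric weight such as $w_I = \ell(I)^{s}$ (equivalently $|I|^{s/n}$) for a suitable exponent $s$ is the candidate, and by symmetry it is enough to bound one of the two sums. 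So the core task reduces to: for fixed $I_1 \in \mathcal D_n$,
$$
\sum_{I_2 \in \mathcal D_n} \frac{\ell(I_1)^{\alpha/2}\,\ell(I_2)^{\alpha/2}}{D(I_1,I_2)^{n+\alpha}}\,|I_1|^{1/2}|I_2|^{1/2}\,\Big(\frac{\ell(I_2)}{\ell(I_1)}\Big)^{s} \lesssim 1.
$$

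The key steps are as follows. First, organize the sum over $I_2$ by relative scale, writing $\ell(I_2) = 2^k \ell(I_1)$ and summing over $k \in \mathbb Z$. For each fixed $k$, partition the cubes $I_2$ at that scale into annular groups according to $d(I_1,I_2) \approx 2^j D$ for $D = \max(\ell(I_1),\ell(I_2))$; the number of dyadic cubes $I_2$ of side $2^k\ell(I_1)$ lying in a given annulus is $O(2^{jn})$ when $2^k \le 1$, and $O(1)$ (just the ancestors and near-neighbors) when $2^k \ge 1$. Then $D(I_1,I_2) \gtrsim 2^{\max(0,j)} \max(\ell(I_1),\ell(I_2))$, and $|I_1|^{1/2}|I_2|^{1/2} = \ell(I_1)^{n/2}\ell(I_2)^{n/2}$, so each term contributes a clean power of $2^k$ and $2^j$. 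Choosing $s = n/2$ (so that $w_I = |I|^{1/2}$, i.e. the $L^2$-normalization weight) makes the $k$-sum for $k \ge 0$ into a convergent geometric series governed by the decay $2^{-k\alpha/2}$ coming from $\ell(I_2)^{\alpha/2}/D^{n+\alpha}$, while for $k \le 0$ the gain $\ell(I_1)^{\alpha/2}$ over $D^{n+\alpha} \gtrsim \ell(I_1)^{n+\alpha}$ together with the counting bound $2^{jn}$ against the decay $2^{-j(n+\alpha)}$ yields geometric convergence in both $j$ and $k$. Summing the geometric series gives the bound $\lesssim 1$, uniformly in $I_1$, which by Schur's lemma yields the bilinear inequality. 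Finally, taking $x_{I_1}$ arbitrary and $y_{I_2} = x_{I_2}$ — or more directly, applying the bilinear bound with the specific choice that makes $\sum_{I_1} A_{I_1 I_2} x_{I_1} = \big(\sum_{I_1} A_{I_1 I_2} x_{I_1}\big)$ appear squared — one reindexes to get $\sum_{I_2}\big[\sum_{I_1} A_{I_1 I_2} x_{I_1}\big]^2 \le \big(\sup_{I_2}\sum_{I_1}A_{I_1I_2}\big)\cdot\big(\sup_{I_1}\sum_{I_2}A_{I_1I_2}\big)\sum_{I_1}x_{I_1}^2 \lesssim \sum_{I_1} x_{I_1}^2$.

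The main obstacle is bookkeeping the counting estimate for dyadic cubes $I_2$ in a fixed annulus around $I_1$ across both regimes $\ell(I_2) \ge \ell(I_1)$ and $\ell(I_2) \le \ell(I_1)$, and verifying that with the choice $s = n/2$ the exponent of $2^k$ is strictly negative for $k \ge 0$ and the combined exponent of $2^j$ and $2^k$ is strictly negative for $k \le 0$ — this is exactly where the hypothesis $\alpha > 0$ is used and where no cancellation is available, only summability of geometric series. Everything else is routine. Since this is a known estimate (it appears in \cite{LL} and \cite{M2014}), I would keep the write-up brief, citing those references for the Schur-test computation and only indicating the choice of weight and the role of $\alpha>0$.
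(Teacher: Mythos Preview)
The paper does not actually prove this lemma: immediately after the statement it writes \qed and defers to the references \cite{LL}, \cite{M2014}. So there is no ``paper's own proof'' to compare against; your Schur-test outline with weight $w_I=|I|^{1/2}$ is a correct and standard way to establish the bilinear bound, and the geometric-series bookkeeping you describe (yielding decay $2^{-|k|\alpha/2}$ in the relative scale and $2^{-j\alpha}$ in the annular index) is exactly how the cited references proceed.

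There is, however, a genuine slip in your derivation of the ``in particular'' statement. You conclude with
\[
\sum_{I_2}\Big[\sum_{I_1} A_{I_1 I_2} x_{I_1}\Big]^2 \le \Big(\sup_{I_2}\sum_{I_1}A_{I_1I_2}\Big)\Big(\sup_{I_1}\sum_{I_2}A_{I_1I_2}\Big)\sum_{I_1}x_{I_1}^2,
\]
but the \emph{unweighted} row/column sums $\sum_{I_2} A_{I_1 I_2}$ are in general infinite: in the regime $\ell(I_2)=2^{k}\ell(I_1)$ with $k\to-\infty$ the contribution behaves like $2^{k(\alpha-n)/2}$, which diverges whenever $\alpha<n$. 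This is precisely why you needed the weight $w_I=|I|^{1/2}$ in the Schur test in the first place. The fix is immediate and does not require any row-sum bound: the second display is just the $\ell^2\to\ell^2$ boundedness of the matrix $(A_{I_1 I_2})$, which follows from the bilinear estimate by duality,
\[
\sum_{I_2}\Big[\sum_{I_1} A_{I_1 I_2} x_{I_1}\Big]^2
=\sup_{\sum y_{I_2}^2\le 1}\Big(\sum_{I_1,I_2}A_{I_1 I_2}x_{I_1}y_{I_2}\Big)^2
\lesssim \sum_{I_1}x_{I_1}^2.
\]
Replace your last sentence with this duality step and the argument is complete.
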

\qed

\begin{lemma}\label{estimate-1}
Let $0 < \alpha \leq n(\lambda_1 -2)/2$. For a given cube $I_2 \in \mathcal{D}_n$ and $(x_1,t_1) \in W_{I_2}$,
the following inequality holds,
\begin{equation*}
\bigg[ \int_{\Rn} \bigg(\int_{I_1} \frac{dz_1}{(t_1 + |x_1 - y_1 - z_1|)^{n+\alpha}} \bigg)^2 \Big(\frac{t_1}{t_1 + |y_1|}\Big)^{n \lambda_1} \frac{dy_1}{t_1^n} \bigg]^{1/2}
\lesssim \frac{|I_1|}{(\ell(I_2) + d(I_1,I_2))^{n + \alpha}}.
\end{equation*}
\end{lemma}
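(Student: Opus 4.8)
The plan is to estimate the inner integral over $z_1$ first, then split the outer integral over $y_1$ into dyadic annuli, and finally sum the resulting geometric-type series, using the hypothesis $\lambda_1 > 2$ together with the constraint $\alpha \le n(\lambda_1-2)/2$ to force convergence.

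\textbf{Step 1: The inner integral.} Since $(x_1,t_1)\in W_{I_2}$ we have $t_1\sim\ell(I_2)$. For a fixed $y_1$, the integrand $z_1\mapsto (t_1+|x_1-y_1-z_1|)^{-(n+\alpha)}$ is bounded by $(t_1+d(x_1-y_1, I_1))^{-(n+\alpha)}$ in the worst case but, more usefully, I would just bound the inner integral crudely by
$$\int_{I_1}\frac{dz_1}{(t_1+|x_1-y_1-z_1|)^{n+\alpha}}\lesssim \frac{|I_1|}{(t_1+|x_1-y_1-c_{I_1}|)^{n+\alpha}},$$
where $c_{I_1}$ is the center of $I_1$; this is valid because $\ell(I_1)<\ell(I_2)\sim t_1$, so over the range $z_1\in I_1$ the quantity $t_1+|x_1-y_1-z_1|$ does not change by more than a constant factor (as $|I_1| \le t_1$ and we are in the $\ell^\infty$ metric, the variation is comparable to $t_1$ at worst). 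Substituting this into the left-hand side leaves
$$|I_1|\bigg[\int_{\Rn}\frac{1}{(t_1+|x_1-y_1-c_{I_1}|)^{2(n+\alpha)}}\Big(\frac{t_1}{t_1+|y_1|}\Big)^{n\lambda_1}\frac{dy_1}{t_1^n}\bigg]^{1/2}.$$

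\textbf{Step 2: Dyadic decomposition of the $y_1$-integral.} Write $R := \ell(I_2)+d(I_1,I_2)$, which is comparable to $t_1 + |x_1 - c_{I_1}|$ up to constants (both $x_1$ and $t_1$ are controlled by $\ell(I_2)$, and the distance from $I_2$ to $I_1$ enters). I would decompose $\Rn$ according to the size of $|y_1|$ relative to $t_1$: the region $\{|y_1|\le t_1\}$ and the annuli $\{2^{k}t_1 < |y_1|\le 2^{k+1}t_1\}$ for $k\ge 0$. On each such region, the Poisson-type factor $(t_1/(t_1+|y_1|))^{n\lambda_1}$ is $\sim 2^{-kn\lambda_1}$ (with $k=0$ on the central region), and the factor $(t_1+|x_1-y_1-c_{I_1}|)^{-2(n+\alpha)}$ is bounded by $\max(R, 2^k t_1)^{-2(n+\alpha)}$, while the measure of the annulus is $\sim (2^k t_1)^n$. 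One then has two regimes inside the annulus sum: when $2^k t_1 \lesssim R$ the spatial factor is $\sim R^{-2(n+\alpha)}$ and the geometric series $\sum 2^{-kn\lambda_1}(2^kt_1)^n t_1^{-n}$ converges (as $\lambda_1>1$); when $2^kt_1\gtrsim R$ the spatial factor is $(2^kt_1)^{-2(n+\alpha)}$ and one needs $\sum_{2^kt_1\gtrsim R} 2^{-kn\lambda_1}(2^kt_1)^{n-2(n+\alpha)}t_1^{-n}\lesssim R^{-2(n+\alpha)}$, which holds precisely because the exponent $-n\lambda_1 + n - 2(n+\alpha) = -n(\lambda_1-2) - 2\alpha \le -4\alpha < 0$ under the hypothesis $\alpha\le n(\lambda_1-2)/2$, making the tail sum comparable to its first term $\sim R^{\,n - 2(n+\alpha)}\cdot(\text{power of }t_1)$, which rearranges to $R^{-2(n+\alpha)}$ after accounting for $t_1\sim R$ on that part of the range. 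Taking the square root of the total gives a bound $\lesssim |I_1| \cdot R^{-(n+\alpha)} = |I_1|/(\ell(I_2)+d(I_1,I_2))^{n+\alpha}$, as claimed.

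\textbf{Main obstacle.} The bookkeeping of the two regimes in the annulus sum is the delicate point: one must verify that the ``far'' tail (where $|y_1|$ dominates both $t_1$ and $R$) is summable and dominated by its leading term, and this is exactly where the quantitative relation $\alpha \le n(\lambda_1-2)/2$ is consumed — it guarantees the decay rate $2^{-k(n(\lambda_1-2)+2\alpha)}$ is strong enough that the sum is controlled by $R^{-2(n+\alpha)}$ rather than by a worse power of $t_1$. A secondary, more routine point is justifying the passage from $(t_1+|x_1-y_1-z_1|)$ to $(t_1+|x_1-y_1-c_{I_1}|)$ uniformly for $z_1\in I_1$, which relies only on $\ell(I_1)<\ell(I_2)\sim t_1$ and the triangle inequality in the $\ell^\infty$ metric, and on identifying $t_1 + |x_1 - c_{I_1}| \sim \ell(I_2) + d(I_1,I_2)$ for $(x_1,t_1)\in W_{I_2}$.
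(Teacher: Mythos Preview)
There are two genuine gaps.

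First, Step~1 invokes the assumption $\ell(I_1)<\ell(I_2)$, which is \emph{not} a hypothesis of the lemma. The lemma is stated for arbitrary $I_1$ and is applied later in the paper (the separated part $\mathcal G_{sep,<}$ and the Carleson-type Lemma~\ref{Car-1}) with $\ell(I_1)\ge\ell(I_2)$. In that regime your replacement of the $z_1$-integral by $|I_1|\,(t_1+|x_1-y_1-c_{I_1}|)^{-(n+\alpha)}$ is false: take $\ell(I_1)\gg t_1$ and $x_1-y_1$ on $\partial I_1$; then the actual integral is $\sim t_1^{-\alpha}$ while your bound is $\sim\ell(I_1)^{-\alpha}\ll t_1^{-\alpha}$. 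The paper's proof never collapses the $z_1$-integral to the center and so needs no size relation between $I_1$ and $I_2$.

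Second---and this breaks the argument even when $\ell(I_1)<\ell(I_2)$---the pointwise bound in Step~2 is incorrect. On the annulus $|y_1|\sim 2^kt_1$ you assert $t_1+|x_1-y_1-c_{I_1}|\gtrsim\max(R,2^kt_1)$, but the single point $y_1=x_1-c_{I_1}$ (which has $|y_1|\sim R$ whenever $d(I_1,I_2)\gg\ell(I_2)$) gives $t_1+|x_1-y_1-c_{I_1}|=t_1\ll R$. So on the critical annulus $2^kt_1\sim R$ the spatial factor is not pointwise controlled, and your geometric sum never gets off the ground. The paper handles exactly this obstruction by splitting at $|y_1|=\tfrac12 d(I_1,I_2)$: for small $|y_1|$ the triangle inequality legitimately gives $t_1+|x_1-y_1-z_1|\gtrsim\ell(I_2)+d(I_1,I_2)$ uniformly in $z_1\in I_1$; for large $|y_1|$ it pulls out the Poisson weight (which \emph{is} uniformly $\lesssim \ell(I_2)^{n\lambda_1-n}/R^{n\lambda_1}$ there) and controls what remains by Young's inequality, $\|\psi_{t_1}*\mathbf 1_{I_1}\|_{L^2}\le\|\psi_{t_1}\|_{L^2}|I_1|$, with the hypothesis $\alpha\le n(\lambda_1-2)/2$ used only at the end to convert the resulting power of $R$ into $R^{-(n+\alpha)}$. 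Your annulus scheme can be repaired in the same spirit: on annuli with $2^kt_1\gtrsim d(I_1,I_2)$ you must integrate the spatial factor (picking up $\|\psi_{t_1}\|_{L^2}$ or $\|\psi_{t_1}\|_{L^1}$) rather than bound it pointwise.
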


\begin{proof}
Fixed $(x_1,t_1) \in W_{I_1}$. If $|y_1| \leq \frac{1}{2} d(I_1,I_2)$, then
$$ t_1 + |x_1 - y_1 - z_1| \gtrsim \ell(I_2) + |x_1 - z_1| - |y_1| \gtrsim \ell(I_2) + d(I_1,I_2).$$
Thus, it follows that
\begin{align*}
\bigg[ \int_{|y_1| \leq \frac{1}{2} d(I_1,I_2)} \bigg(\int_{I_1} \frac{dz_1}{(t_1 + |x_1 - y_1 - z_1|)^{n+\alpha}} \bigg)^2 \Big(\frac{t_1}{t_1 + |y_1|}\Big)^{n \lambda_1} \frac{dy_1}{t_1^n} \bigg]^{1/2}
\lesssim \frac{|I_1|}{(\ell(I_2) + d(I_1,I_2))^{n + \alpha}}.
\end{align*}

If $|y_1| > \frac{1}{2} d(I_1,I_2)$, then
$$ \Big(\frac{t_1}{t_1 + |y_1|}\Big)^{n \lambda_1} \frac{1}{t_1^n}
\lesssim \frac{\ell(I_2)^{n \lambda_1 - n}}{(\ell(I_2) + d(I_1,I_2))^{n \lambda_1}}.$$
Hence, we have
\begin{align*}
&\bigg[ \int_{|y_1| > \frac{1}{2} d(I_1,I_2)} \bigg(\int_{I_1} \frac{dz_1}{(t_1 + |x_1 - y_1 - z_1|)^{n+\alpha}} \bigg)^2 \Big(\frac{t_1}{t_1 + |y_1|}\Big)^{n \lambda_1} \frac{dy_1}{t_1^n} \bigg]^{1/2} \\
&\lesssim \frac{\ell(I_2)^{\frac{n \lambda_1}{2} - \frac{n}{2}}}{(\ell(I_2) + d(I_1,I_2))^{\frac{n \lambda_1}{2}}} \big\|\psi_{t_1}*\mathbf{1}_{I_1}\big\|_{L^2(\Rn)}
\lesssim \frac{\ell(I_2)^{\frac{n \lambda_1}{2} - \frac{n}{2}}}{(\ell(I_2) + d(I_1,I_2))^{\frac{n \lambda_1}{2}}}
\big\|\psi_{t_1}\big\|_{L^2(\Rn)} |I_1| \\
&\lesssim \frac{\ell(I_2)^{\frac{n \lambda_1}{2} - n -\alpha}}{(\ell(I_2) + d(I_1,I_2))^{\frac{n \lambda_1}{2}}} |I_1|
\lesssim \frac{|I_1|}{(\ell(I_2) + d(I_1,I_2))^{n + \alpha}}.
\end{align*}
where $\psi_{t_1}(z_1)=(t_1 + |z_1|)^{-n - \alpha}$ and we have used the condition $0 < \alpha \leq n(\lambda_1 -2)/2$ in the last step.
\end{proof}

Now we turn our attention to the estimate of $\mathcal{G}_{<,<}$. An easy consequence of the H\"{o}lder estimates of the kernel $K_{t_1,t_2}$ is that:
\begin{align*}
|\theta_{t_1,t_2} h_{I_1 \times J_1}(x-y)|
&\lesssim |I_1|^{-1/2} \int_{I_1} \frac{\ell(I_1)^\alpha}{(t_1 + |x_1 - y_1 - z_1|)^{n+\alpha}} dz_1\\
&\quad\quad\times |J_1|^{-1/2} \int_{J_1} \frac{\ell(J_1)^\beta}{(t_2 + |x_2 - y_2 - z_2|)^{m + \beta}} dz_2.
\end{align*}
Moreover, by Lemma $\ref{estimate-1}$, we can obtain that
\begin{equation}\aligned\label{iint-estimate}
&\mathcal{P}(x,t)\\
&:=\bigg( \iint_{\R^{n+m}} |\theta_{t_1,t_2} h_{I_1 \times J_1}(x-y)|^2
\Big(\frac{t_1}{t_1 + |y_1|}\Big)^{n \lambda_1} \Big(\frac{t_2}{t_2 + |y_2|}\Big)^{m \lambda_2} \frac{dy_1}{t_1^n}
\frac{dy_2}{t_2^m} \bigg)^{1/2} \\
&\lesssim |I_1|^{-1/2} \bigg[ \int_{\Rn} \bigg(\int_{I_1} \frac{\ell(I_1)^\alpha \ dz_1}{(t_1 + |x_1 - y_1 - z_1|)^{n+\alpha}} \bigg)^2 \Big(\frac{t_1}{t_1 + |y_1|}\Big)^{n \lambda_1} \frac{dy_1}{t_1^n} \bigg]^{1/2}  \\
&\quad \times |J_1|^{-1/2} \bigg[ \int_{\R^m} \bigg(\int_{J_1} \frac{\ell(J_1)^\beta \ dz_2}{(t_2 + |x_2 - y_2 - z_2|)^{m+\beta}} \bigg)^2 \Big(\frac{t_2}{t_2 + |y_2|}\Big)^{m \lambda_2} \frac{dy_2}{t_2^n} \bigg]^{1/2}  \\
&\lesssim \frac{\ell(I_1)^\alpha}{(\ell(I_2) + d(I_1,I_2))^{n + \alpha}} |I_1|^{1/2}
\frac{\ell(J_1)^\beta}{(\ell(J_2) + d(J_1,J_2))^{m + \beta}} |J_1|^{1/2} .
\endaligned
\end{equation}
Since $\ell(I_1) < \ell(I_2)$ and $\ell(J_1) < \ell(J_2)$, then we get
$$\mathcal{P}(x,t) \lesssim A_{I_1 I_2}|I_2|^{-1/2} \cdot A_{J_1 J_2}|J_2|^{-1/2}.$$
Therefore, by Minkowski's inequality and Lemma $\ref{A-I1-I2}$, it now follows that
\begin{align*}
\mathcal{G}_{<,<}
&\lesssim \sum_{I_2,J_2:good} \iint_{W_{J_2}} \iint_{W_{I_2}} \bigg[ \sum_{\substack{\ell(I_1) < \ell(I_2) \\
\ell(J_1) < \ell(J_2)}} |f_{I_1 J_1}| \bigg( \iint_{\R^{n+m}} |\theta_{t_1,t_2} h_{I_1 \times J_1}(x-y)|^2 \\
&\quad\quad \times \Big(\frac{t_1}{t_1 + |y_1|}\Big)^{n \lambda_1} \Big(\frac{t_2}{t_2 + |y_2|}\Big)^{m \lambda_2} \frac{dy_1}{t_1^n} \frac{dy_2}{t_2^m} \bigg)^{1/2} \bigg]^2 \frac{dx_1 dt_1}{t_1} \frac{dx_2 dt_2}{t_2} \\
&\lesssim \sum_{J_2} \sum_{I_2} \Big[ \sum_{I_1} A_{I_1 I_2} \sum_{J_1} A_{J_1 J_2} |f_{I_1 J_1}| \Big]^2 \\
&\lesssim \sum_{J_2} \sum_{I_1} \Big[ \sum_{J_1} A_{J_1 J_2} |f_{I_1 J_1}| \Big]^2 \\
&\lesssim \sum_{I_1} \sum_{J_1} |f_{I_1 J_1}|^2
=\big\| f \big\|_{L^2(\R^{n+m})}^2 .
\end{align*}
\section{The Case : $\ell(I_1) \geq \ell(I_2)$ and $\ell(J_1) < \ell(J_2)$}

In any case,we perform the splitting
$$ \sum_{\ell(I_1) \geq \ell(I_2)}
=\sum_{\substack{\ell(I_1) \geq \ell(I_2) \\ d(I_1,I_2) > \ell(I_2)^{\gamma_n} \ell(I_1)^{1-\gamma_n}}}
+\sum_{\substack{\ell(I_1) > 2^r \ell(I_2)\\d(I_1,I_2) \leq \ell(I_2)^{\gamma_n} \ell(I_1)^{1-\gamma_n}}}
+\sum_{\substack{\ell(I_2) \leq \ell(I_1) \leq 2^r \ell(I_2) \\ d(I_1,I_2) \leq \ell(I_2)^{\gamma_n}\ell(I_1)^{1-\gamma_n}}}. $$
These three parts are called separated, nested and adjacent respectively. The term nested makes sense, since the summing conditions that $I_2$ is good actually imply that $I_1$ is the ancestor of $I_2$. Thus, there holds
$$ \mathcal{G}_{\geq,<} \lesssim \mathcal{G}_{sep,<} + \mathcal{G}_{nes,<} + \mathcal{G}_{adj,<},$$
where
\begin{align*}
\mathcal{G}_{sep,<}
&= \sum_{I_2,J_2: good} \iint_{W_{J_2}} \iint_{W_{I_2}} \iint_{\R^{n+m}} \Big| \sum_{\substack{I_1:\ell(I_1) \geq \ell(I_2) \\ d(I_1,I_2) > \ell(I_2)^{\gamma_n} \ell(I_1)^{1-\gamma_n}}} \sum_{J_1:\ell(J_1) < \ell(J_2)} f_{I_1 J_1} \\
&\quad\quad \times \theta_{t_1,t_2} h_{I_1 \times J_1}(x-y) \Big|^2 \Big(\frac{t_1}{t_1 + |y_1|}\Big)^{n \lambda_1} \Big(\frac{t_2}{t_2 + |y_2|}\Big)^{m \lambda_2} \frac{dy_1}{t_1^n} \frac{dy_2}{t_2^m} \frac{dx_1 dt_1}{t_1}
\frac{dx_2 dt_2}{t_2},
\end{align*}
\begin{align*}
\mathcal{G}_{nes,<}
&= \sum_{I_2,J_2: good} \iint_{W_{J_2}} \iint_{W_{I_2}} \iint_{\R^{n+m}} \Big| \sum_{\substack{I_1:\ell(I_1) > 2^r \ell(I_2)\\d(I_1,I_2) \leq \ell(I_2)^{\gamma_n} \ell(I_1)^{1-\gamma_n}}} \sum_{J_1:\ell(J_1)<\ell(J_2)}f_{I_1 J_1}  \\
&\quad\quad  \times \theta_{t_1,t_2} h_{I_1 \times J_1}(x-y) \Big|^2 \Big(\frac{t_1}{t_1 + |y_1|}\Big)^{n \lambda_1} \Big(\frac{t_2}{t_2 + |y_2|}\Big)^{m \lambda_2} \frac{dy_1}{t_1^n} \frac{dy_2}{t_2^m} \frac{dx_1 dt_1}{t_1} \frac{dx_2 dt_2}{t_2},
\end{align*}
and
\begin{align*}
\mathcal{G}_{adj,<}
&= \sum_{I_2,J_2:good} \iint_{W_{J_2}} \iint_{W_{I_2}} \iint_{\R^{n+m}} \Big| \sum_{\substack{I_1:\ell(I_2) \leq \ell(I_1) \leq 2^r \ell(I_2) \\ d(I_1,I_2) \leq \ell(I_2)^{\gamma_n} \ell(I_1)^{1-\gamma_n}}} \sum_{J_1:\ell(J_1)<\ell(J_2)} f_{I_1 J_1} \\
&\quad\quad  \times \theta_{t_1,t_2} h_{I_1 \times J_1}(x-y) \Big|^2  \Big(\frac{t_1}{t_1 + |y_1|}\Big)^{n \lambda_1} \Big(\frac{t_2}{t_2 + |y_2|}\Big)^{m \lambda_2} \frac{dy_1}{t_1^n} \frac{dy_2}{t_2^m} \frac{dx_1 dt_1}{t_1} \frac{dx_2 dt_2}{t_2}.
\end{align*}

Now, we are in the position to estimate the above three terms, respectively.
\subsection{Separated Part $\mathcal{G}_{sep,<}$.}In this case, we note that the following inequality holds,
\begin{equation}\label{t1-alpha}
\frac{t_1^\alpha}{(\ell(I_2) + d(I_1,I_2))^{n + \alpha}} |I_1|^{1/2}
\lesssim A_{I_1 I_2}|I_2|^{-1/2}.
\end{equation}
Indeed, if $d(I_1,I_2) \geq \ell(I_1)$, then $D(I_1,I_2) \thicksim d(I_1,I_2)$. Therefore, we get
\begin{equation*}
\frac{t_1^\alpha}{(\ell(I_2) + d(I_1,I_2))^{n + \alpha}} |I_1|^{1/2}
\lesssim A_{I_1 I_2}|I_2|^{-1/2}.
\end{equation*}
If $d(I_1,I_2) < \ell(I_1)$, then $D(I_1,I_2) \thicksim \ell(I_1)$. Moreover, notice that $\gamma_n (n + \alpha) = \alpha/2$ and $d(I_1,I_2) > \ell(I_2)^{\gamma_n} \ell(I_1)^{1-\gamma_n}$, one may conclude that
\begin{equation*}
\frac{t_1^\alpha}{(\ell(I_2) + d(I_1,I_2))^{n + \alpha}} |I_1|^{1/2}
\lesssim \frac{\ell(I_1)^{\alpha/2} \ell(I_2)^{\alpha/2}}{\ell(I_1)^{n + \alpha}}|I_1|^{1/2}
\lesssim A_{I_1 I_2}|I_2|^{-1/2}.
\end{equation*}

It is obvious that the mixed H\"{o}lder and size condition implies that
\begin{align*}
|\theta_{t_1,t_2} h_{I_1 \times J_1}(x-y)|
&\lesssim |I_1|^{-1/2} \int_{I_1} \frac{t_1^\alpha}{(t_1 + |x_1 - y_1 - z_1|)^{n+\alpha}} dz_1\\
&\quad \times |J_1|^{-1/2} \int_{J_1} \frac{\ell(J_1)^\beta}{(t_2 + |x_2 - y_2 - z_2|)^{m + \beta}} dz_2.
\end{align*}
Thus, combining Lemma $\ref{estimate-1}$ with $(\ref{t1-alpha})$, one can obtain
\begin{equation}\aligned\label{Rnm-theta}
\mathcal{P}(x,t)
&\lesssim |I_1|^{-1/2} \bigg[ \int_{\Rn} \bigg(\int_{I_1} \frac{t_1^\alpha \ dz_1}{(t_1 + |x_1 - y_1 - z_1|)^{n+\alpha}} \bigg)^2 \Big(\frac{t_1}{t_1 + |y_1|}\Big)^{n \lambda_1} \frac{dy_1}{t_1^n} \bigg]^{1/2}  \\
&\quad \times |J_1|^{-1/2} \bigg[ \int_{\R^m} \bigg(\int_{J_1} \frac{\ell(J_1)^\beta \ dz_2}{(t_2 + |x_2 - y_2 - z_2|)^{m+\beta}} \bigg)^2 \Big(\frac{t_2}{t_2 + |y_2|}\Big)^{m \lambda_2} \frac{dy_2}{t_2^n} \bigg]^{1/2}  \\
&\lesssim \frac{t_1^\alpha}{(\ell(I_2) + d(I_1,I_2))^{n + \alpha}} |I_1|^{1/2}
\frac{\ell(J_1)^\beta}{(\ell(J_2) + d(J_1,J_2))^{m + \beta}} |J_1|^{1/2} \\
&\lesssim \frac{\ell(I_2)^\alpha}{d(I_1,I_2)^{n + \alpha}} |I_1|^{1/2} A_{J_1 J_2} |J_2|^{-1/2}
\lesssim A_{I_1 I_2}|I_2|^{-1/2} \cdot A_{J_1 J_2}|J_2|^{-1/2}.
\endaligned
\end{equation}
Consequently, by the similar argument as $\mathcal{G}_{<,<}$, we have
$$\mathcal{G}_{sep,<} \lesssim \big\| f \big\|_{L^2(\R^{n+m})}^2.$$
\subsection{Adjacent Part $\mathcal{G}_{adj,<}$.}
The summation conditions $\ell(I_2) \leq \ell(I_1) \leq 2^r \ell(I_2)$ and $d(I_1,I_2) \leq \ell(I_2)^{\gamma_n} \ell(I_1)^{1- \gamma_n}$ indicate that $\ell(I_1) \thicksim \ell(I_2) \thicksim D(I_1,I_2)$. Thus,
$$ \frac{\ell(I_2)^\alpha}{(\ell(I_2) + d(I_1,I_2))^{n + \alpha}} |I_1|^{1/2}
\lesssim \ell(I_2)^{-n}
\thicksim \frac{\ell(I_1)^{\alpha/2}\ell(I_2)^{\alpha/2}}{D(I_1,I_2)^{n+\alpha}}.$$
It follows from $(\ref{Rnm-theta})$ that
\begin{align*}
\mathcal{P}(x,t)
\lesssim \frac{\ell(I_2)^\alpha}{(\ell(I_2) + d(I_1,I_2))^{n + \alpha}} |I_1|^{1/2} \cdot A_{J_1 J_2} |J_2|^{-1/2}
\lesssim  A_{I_1 I_2}|I_2|^{-1/2} \cdot A_{J_1 J_2} |J_2|^{-1/2}.
\end{align*}
Therefore, exactly as we have seen before,
$$ \mathcal{G}_{adj,<} \lesssim \big\| f \big\|_{L^2(\R^{n+m})}^2. $$
\subsection{Nested Part $\mathcal{G}_{nes,<}$.}
We use $I^{(k)} \in \mathcal{D}_n$ to denote the unique cube for which $\ell(I^{(k)}) = 2^k \ell(I)$ and $I \subset I^{(k)}$. We call $I^{(k)}$ as the $k$ generation older dyadic ancestor of $I$. In this case, by the goodness of $I_2$, it must actually have $I_2 \subsetneq I_1$. That is, $I_1$ is the ancestor of $I_2$.
This enables us to write
\begin{align*}
\mathcal{G}_{nes,<}
&= \sum_{I,J_2:good} \iint_{W_{J_2}} \iint_{W_{I}} \iint_{\R^{n+m}} \Big| \sum_{k=1}^\infty
\sum_{J_1:\ell(J_1)<\ell(J_2)} f_{I^{(k)} J_1} \theta_{t_1,t_2} h_{I^{(k)} \times J_1}(x-y) \Big|^2 \\
&\quad\quad\quad  \times   \Big(\frac{t_1}{t_1 + |y_1|}\Big)^{n \lambda_1} \Big(\frac{t_2}{t_2 + |y_2|}\Big)^{m \lambda_2} \frac{dy_1}{t_1^n} \frac{dy_2}{t_2^m} \frac{dx_1 dt_1}{t_1} \frac{dx_2 dt_2}{t_2}.
\end{align*}

We introduce the notation
$$ s_I^k = - \mathbf{1}_{(I^{(k-1)})^c} \langle h_{I^{(k)}} \rangle_{I^{(k-1)}} + \sum_{\substack{I' \in ch(I^{(k)}) \\ I' \neq I^{(k-1)}}} \mathbf{1}_{I'} h_{I^{(k)}}.$$
Then, it is easy to check that
\begin{equation}\label{h-I-k}
 h_{I^{(k)}} = s_I^k + \langle h_{I^{(k)}} \rangle_{I^{(k-1)}},
\end{equation}
$\supp s_I^k \subset (I^{(k-1)})^c$, and $|s_I^k| \lesssim |I^{(k)}|^{-1/2}$.

Denote $f_{J_1} = \langle f,h_{J_1} \rangle$ so that $f_{J_1} = \int_{\R^m} f(y_1,y_2) dy_2$, $y_1 \in \Rn$. 
Then, we split
$$ \mathcal{G}_{nes,<} \lesssim \mathcal{G}_{mod,<} + \mathcal{G}_{Car,<} \ \ ,$$
where
\begin{align*}
\mathcal{G}_{mod,<}
&= \sum_{I,J_2: good} \iint_{W_{J_2}} \iint_{W_{I}} \iint_{\R^{n+m}} \Big| \sum_{k=1}^\infty \sum_{J_1:\ell(J_1)<\ell(J_2)}
f_{I^{(k)} J_1} \theta_{t_1,t_2}(s_I^k \otimes h_{J_1})(x-y) \Big|^2 \\
&\quad\quad\quad\quad\quad\quad  \times \Big(\frac{t_1}{t_1 + |y_1|}\Big)^{n \lambda_1} \Big(\frac{t_2}{t_2 + |y_2|}\Big)^{m \lambda_2} \frac{dy_1}{t_1^n} \frac{dy_2}{t_2^m} \frac{dx_1 dt_1}{t_1} \frac{dx_2 dt_2}{t_2}
\end{align*}
and
\begin{align*}
\mathcal{G}_{Car,<}
&= \sum_{I,J_2: good} \iint_{W_{J_2}} \iint_{W_{I}} \iint_{\R^{n+m}} \Big| \sum_{J_1:\ell(J_1)<\ell(J_2)}
f_{I^{(k)} J_1} \theta_{t_1,t_2}(\mathbf{1} \otimes h_{J_1})(x-y)  \\
&\quad \times \sum_{k=1}^\infty \langle \Delta_{I^{(k)}} f_{J_1} \rangle_{I^{(k-1)}}\Big|^2 \Big(\frac{t_1}{t_1 + |y_1|}\Big)^{n \lambda_1} \Big(\frac{t_2}{t_2 + |y_2|}\Big)^{m \lambda_2} \frac{dy_1}{t_1^n} \frac{dy_2}{t_2^m} \frac{dx_1 dt_1}{t_1} \frac{dx_2 dt_2}{t_2}\\
&= \sum_{I,J_2: good} \iint_{W_{J_2}} \iint_{W_{I}} \iint_{\R^{n+m}} \Big| \sum_{J_1:\ell(J_1)<\ell(J_2)}
\langle f_{J_1} \rangle_{I} \theta_{t_1,t_2}(\mathbf{1} \otimes h_{J_1})(x-y) \Big|^2  \\
&\quad \times \Big(\frac{t_1}{t_1 + |y_1|}\Big)^{n \lambda_1} \Big(\frac{t_2}{t_2 + |y_2|}\Big)^{m \lambda_2} \frac{dy_1}{t_1^n} \frac{dy_2}{t_2^m} \frac{dx_1 dt_1}{t_1} \frac{dx_2 dt_2}{t_2}.
\end{align*}

\vspace{0.3cm}
\noindent\textbf{$\bullet$ Estimate of $\mathcal{G}_{mod,<}$.} We need the following lemma. 

\begin{lemma}\label{2-alpha-k-2}
Let $0 < \beta \leq m(\lambda_2 -2)/2$ and $k \in \N_+$. Given cubes $I \in \mathcal{D}_n$, $J_1,J_2 \in \mathcal{D}_m$, $(x_1,t_1) \in W_I$, and $(x_2,t_2) \in W_{J_1}$, the following estimate holds
\begin{align*}
\mathcal{Q}(x,t)&:=\bigg( \iint_{\R^{n+m}} |\theta_{t_1,t_2}(s_I^k \otimes h_{J_1})(x-y)|^2
\Big(\frac{t_1}{t_1 + |y_1|}\Big)^{n \lambda_1} \Big(\frac{t_2}{t_2 + |y_2|}\Big)^{m \lambda_2} \frac{dy_1}{t_1^n}
\frac{dy_2}{t_2^m} \bigg)^{1/2} \\
&\lesssim 2^{-\alpha k/2} |I^{(k)}|^{-1/2}|J_1|^{1/2} \frac{\ell(J_1)^\beta}{(\ell(J_2) + d(J_1,J_2))^{m + \beta}}
\end{align*}
\end{lemma}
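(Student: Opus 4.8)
The plan is to bound $\mathcal{Q}(x,t)$ by a product of a factor in the first ($\Rn$) variables and a factor in the second ($\R^m$) variables, and to dispose of the second factor directly by Lemma \ref{estimate-1}. Since each $h_{J_1}$ appearing in the Haar expansion is cancellative, $\int_{\R^m} h_{J_1}=0$; and since $(x_2,t_2)\in W_{J_2}$ with $\ell(J_1)<\ell(J_2)$ one has, with the $\ell^\infty$ metric, $|z_2-c_{J_1}|\le \ell(J_1)/2\le \ell(J_2)/4<t_2/2$ for every $z_2\in J_1$, where $c_{J_1}$ is the centre of $J_1$. Hence inside the $z_2$–integral I may subtract $K_{t_1,t_2}(x-y,z_1,c_{J_1})$ and apply the mixed H\"older and size condition (Assumption 1(3)), obtaining
$$
|\theta_{t_1,t_2}(s_I^k\otimes h_{J_1})(x-y)|\lesssim \Big(\int \frac{t_1^{\alpha}\,|s_I^k(z_1)|}{(t_1+|x_1-y_1-z_1|)^{n+\alpha}}\,dz_1\Big)\,|J_1|^{-1/2}\ell(J_1)^{\beta}\Big(\int_{J_1}\frac{dz_2}{(t_2+|x_2-y_2-z_2|)^{m+\beta}}\Big).
$$
Substituting into $\mathcal{Q}(x,t)$, the $(y_1,z_1)$–part and the $(y_2,z_2)$–part together with the two $g_\lambda^*$–weights separate into a product. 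The $(y_2,z_2)$–part is exactly the quantity estimated in Lemma \ref{estimate-1} (with $n,\alpha,\lambda_1,I_1,I_2,(x_1,t_1)$ replaced by $m,\beta,\lambda_2,J_1,J_2,(x_2,t_2)\in W_{J_2}$, which is legitimate since $0<\beta\le m(\lambda_2-2)/2$), so it contributes $|J_1|^{-1/2}\ell(J_1)^{\beta}\cdot|J_1|/(\ell(J_2)+d(J_1,J_2))^{m+\beta}=|J_1|^{1/2}\ell(J_1)^{\beta}/(\ell(J_2)+d(J_1,J_2))^{m+\beta}$, precisely the $J$–factor in the claim. Thus the lemma reduces to
$$
\mathcal{Q}_1:=\bigg[\int_{\Rn}\Big(\int \frac{t_1^{\alpha}\,|s_I^k(z_1)|}{(t_1+|x_1-y_1-z_1|)^{n+\alpha}}\,dz_1\Big)^2\Big(\frac{t_1}{t_1+|y_1|}\Big)^{n\lambda_1}\frac{dy_1}{t_1^n}\bigg]^{1/2}\lesssim 2^{-\alpha k/2}\,|I^{(k)}|^{-1/2}.
$$

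To estimate $\mathcal{Q}_1$ I would use only the properties recorded just after $(\ref{h-I-k})$, namely $\supp s_I^k\subset (I^{(k-1)})^c$ and $|s_I^k|\lesssim |I^{(k)}|^{-1/2}$, together with $t_1\thicksim\ell(I)$ for $(x_1,t_1)\in W_I$ and the goodness of $I$ (which holds in the sum $\mathcal{G}_{mod,<}$ where the lemma is used). Goodness of $I$ applied to the ancestor $I^{(k-1)}$ --- legitimate as soon as $2^{k-1}\ge 2^r$, while for the finitely many smaller $k$ the target factor $2^{-\alpha k/2}$ is $\gtrsim 1$ and the claim follows from the crude bound $\mathcal{Q}_1\lesssim |I^{(k)}|^{-1/2}$ --- gives $\dist\big(x_1,(I^{(k-1)})^c\big)\ge \dist(I,\partial I^{(k-1)})>\ell(I)^{\gamma_n}\ell(I^{(k-1)})^{1-\gamma_n}\gtrsim 2^{k(1-\gamma_n)}\ell(I)=:\rho_k$. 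I then split the $y_1$–integral at $|y_1|=\rho_k/2$. On $\{|y_1|\le\rho_k/2\}$, for $z_1\in (I^{(k-1)})^c$ one has $|x_1-y_1-z_1|\ge \rho_k-\rho_k/2\gtrsim\rho_k\gtrsim t_1$, so the inner integral is $\lesssim |I^{(k)}|^{-1/2}(t_1/\rho_k)^{\alpha}\lesssim |I^{(k)}|^{-1/2}2^{-\alpha k(1-\gamma_n)}$, while $\int(t_1/(t_1+|y_1|))^{n\lambda_1}\,dy_1/t_1^n\lesssim 1$. On $\{|y_1|>\rho_k/2\}$ I would use the crude bound $\int t_1^{\alpha}|s_I^k(z_1)|(t_1+|x_1-y_1-z_1|)^{-n-\alpha}dz_1\lesssim |I^{(k)}|^{-1/2}$ and instead spend the weight, $\int_{|y_1|>\rho_k/2}(t_1/(t_1+|y_1|))^{n\lambda_1}\,dy_1/t_1^n\lesssim (t_1/\rho_k)^{n(\lambda_1-1)}\lesssim 2^{-n(\lambda_1-1)k(1-\gamma_n)}$. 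Since $\gamma_n=\frac{\alpha}{2(n+\alpha)}$ forces $1-\gamma_n>\tfrac12$, and $\alpha\le n(\lambda_1-2)/2<n(\lambda_1-1)/2$, both contributions are $\lesssim |I^{(k)}|^{-1/2}2^{-\alpha k(1-\gamma_n)}\le|I^{(k)}|^{-1/2}2^{-\alpha k/2}$, which yields the required estimate for $\mathcal{Q}_1$ and hence the lemma.

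The main obstacle is the estimate for $\mathcal{Q}_1$, and specifically that $s_I^k$ carries \emph{no} usable cancellation: by $(\ref{h-I-k})$ it equals $h_{I^{(k)}}$ corrected by the constant $\langle h_{I^{(k)}}\rangle_{I^{(k-1)}}$, hence has a non-vanishing tail of size $\thicksim|I^{(k)}|^{-1/2}$ on $(I^{(k)})^c$, and the H\"older estimates of $K_{t_1,t_2}$ cannot be exploited in the first variable. The geometric gain $2^{-\alpha k/2}$ must therefore be produced entirely from the spatial separation $\rho_k\thicksim 2^{k(1-\gamma_n)}t_1$ between $x_1$ and $\supp s_I^k$ furnished by goodness. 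The genuinely delicate region is $\{|y_1|\gtrsim\rho_k\}$, where that separation is destroyed by the translation by $y_1$ and the decay must be recovered from the $g_\lambda^*$–weight $(t_1/(t_1+|y_1|))^{n\lambda_1}$ alone; this is exactly where the quantitative relation among $\alpha$, $\gamma_n$ and $\lambda_1$ (equivalently, the standing hypothesis $\alpha\le n(\lambda_1-2)/2$) is consumed.
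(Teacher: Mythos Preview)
Your argument is correct, and the overall architecture matches the paper: separate the $\Rn$ and $\R^m$ factors via the mixed H\"older and size estimate, dispose of the $J$--factor by Lemma~\ref{estimate-1}, and reduce to the one--variable quantity (your $\mathcal{Q}_1$, the paper's $\mathcal{K}$) by exploiting only $\supp s_I^k\subset (I^{(k-1)})^c$, $|s_I^k|\lesssim|I^{(k)}|^{-1/2}$, and goodness of $I$. You also correctly read $(x_2,t_2)\in W_{J_2}$ with $\ell(J_1)<\ell(J_2)$, which is what the application in $\mathcal{G}_{nes,<}$ actually provides.

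The genuine difference is in how $\mathcal{Q}_1$ is estimated. The paper splits the \emph{inner} $z_1$--integral, for each $y_1$, into $E_1=\{z_1\in(I^{(k-1)})^c:|z_1-x_1|\ge 2|y_1|\}$ and $E_2=\{|z_1-x_1|<2|y_1|\}$; on $E_1$ one recovers the reference integral $\int_{(I^{(k-1)})^c}\ell(I)^\alpha|z_1-x_1|^{-n-\alpha}dz_1\lesssim 2^{-\alpha k/2}$, while on $E_2$ the weight is transferred to $(\ell(I)+|x_1-z_1|)^{-n\lambda_1/2}$ and a Young's inequality $\|\xi*\eta\|_{L^2}\le\|\xi\|_{L^2}\|\eta\|_{L^1}$ finishes. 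You instead split the \emph{outer} $y_1$--integral at the fixed scale $\rho_k/2\sim 2^{k(1-\gamma_n)}t_1$: for $|y_1|\le\rho_k/2$ the geometric separation survives in $|x_1-y_1-z_1|\gtrsim\rho_k$, giving $(t_1/\rho_k)^\alpha$ directly; for $|y_1|>\rho_k/2$ the crude bound on the inner integral and the tail of the $g_\lambda^*$--weight give $(t_1/\rho_k)^{n(\lambda_1-1)/2}$. Both approaches consume the same numerology ($\gamma_n<1/2$ and $\alpha\le n(\lambda_1-2)/2$), but yours is a bit more elementary in that it avoids the convolution/Young step; the paper's splitting has the advantage that the resulting estimate~(\ref{estimate-2}) is stated uniformly in $y_1$ and is reused later (e.g.\ in Lemma~\ref{Car-1} and in $\mathcal{G}_{mod,mod}$).
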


\begin{proof}
By using the mixed H\"{o}lder and size condition, it yields that
\begin{align*}
|\theta_{t_1,t_2}(s_I^k \otimes h_{J_1})(x-y)|
&\lesssim |I^{(k)}|^{-1/2} \int_{(I^{(k-1)})^c}\frac{t_1^\alpha }{(t_1 + |x_1 - y_1 - z_1|)^{n+\alpha}} dz_1 \\
&\quad \times |J_1|^{-1/2} \int_{J_1} \frac{\ell(J_1)^\beta}{(t_2 + |x_2 - y_2 - z_2|)^{m + \beta}} dz_2
\end{align*}
Similarly as in $(\ref{iint-estimate})$, we only need to show
\begin{equation}\aligned\label{estimate-2}
\mathcal{K}:= \bigg[ \int_{\Rn} \bigg(\int_{(I^{(k-1)})^c} \frac{t_1^\alpha \ dz_1}{(t_1 + |x_1 - y_1 - z_1|)^{n + \alpha}} \bigg)^2 \Big(\frac{t_1}{t_1 + |y_1|}\Big)^{n \lambda_1} \frac{dy_1}{t_1^n} \bigg]^{1/2}
\thicksim 2^{-\alpha k/2}.
\endaligned
\end{equation}
Indeed, if $k \leq r$,
\begin{align*}
\mathcal{K} & \lesssim \bigg[ \int_{\Rn} \bigg(\int_{\Rn} \frac{\ell(I)^\alpha \ dz_1}{(\ell(I) + |x_1 - y_1 - z_1|)^{n + \alpha}} \bigg)^2 \Big(\frac{t_1}{t_1 + |y_1|}\Big)^{n \lambda_1} \frac{dy_1}{t_1^n} \bigg]^{1/2} \\
&\lesssim \int_{\Rn} \frac{\ell(I)^\alpha}{(\ell(I) + |x_1 - z_1|)^{n + \alpha}} dz_1 \\
&\lesssim \ell(I)^{-n} |I| + \ell(I)^{\alpha} \int_{I^c} \frac{dz_1}{|z_1 - x_1|^{n + \alpha}}
\lesssim 1 \thicksim 2^{-\alpha k/2}.
\end{align*}
If $k > r$, the goodness of $I$ gives that
$$ d(I,(I^{(k-1)})^c) > \ell(I)^{\gamma_n} \ell(I^{(k-1)})^{1-\gamma_n}= 2^{(k-1)(1-\gamma_n)} \ell(I) \gtrsim 2^{k/2} \ell(I).$$
Therefore, we obtain
\begin{align*}
\int_{(I^{(k-1)})^c} \frac{\ell(I)^\alpha}{|z_1 - x_1|^{n+\alpha}} dz_1
&\leq \int_{B(x_1,d(I,(I^{(k-1)})^c))} \frac{\ell(I)^\alpha}{|z_1 - x_1|^{n+\alpha}} dz_1 \\
&\lesssim \ell(I)^\alpha d(I,(I^{(k-1)})^c)^{-\alpha} \lesssim 2^{-\alpha k/2}.
\end{align*}
Given $y_1 \in \Rn$, we introduce the notation
$$ E_1 = \big\{z_1 \in (I^{(k-1)})^c; |z_1 - x_1| \geq 2|y_1| \big\}, \
   E_2 = \big\{z_1 \in (I^{(k-1)})^c; |z_1 - x_1| < 2|y_1| \big\}.$$
Then, it follows that
\begin{align*}
\mathcal{K} & \lesssim \bigg[ \int_{\Rn} \bigg(\int_{E_1} \frac{\ell(I)^\alpha \ dz_1}{(\ell(I) + |x_1 - y_1 - z_1|)^{n + \alpha}} \bigg)^2 \Big(\frac{t_1}{t_1 + |y_1|}\Big)^{n \lambda_1} \frac{dy_1}{t_1^n} \bigg]^{1/2} \\
&\quad + \bigg[ \int_{\Rn} \bigg(\int_{E_2} \frac{\ell(I)^\alpha \ dz_1}{(\ell(I) + |x_1 - y_1 - z_1|)^{n + \alpha}} \bigg)^2 \Big(\frac{t_1}{t_1 + |y_1|}\Big)^{n \lambda_1} \frac{dy_1}{t_1^n} \bigg]^{1/2} \\
&:= \mathcal{K}_1 + \mathcal{K}_2.
\end{align*}
Note that $ \ell(I) + |x_1 - y_1 - z_1| > |z_1 - x_1| - |y_1| \geq \frac12 |z_1 - x_1|$ whenever $z_1 \in E_1$. This yields that
$$ \mathcal{K}_1
\lesssim \int_{(I^{(k-1)})^c} \frac{\ell(I)^\alpha}{|z_1 - x_1|^{n+\alpha}}dz_1 \cdot
\bigg[ \int_{\Rn} \Big(\frac{t_1}{t_1 + |y_1|}\Big)^{n \lambda_1} \frac{dy_1}{t_1^n} \bigg]^{1/2}
\lesssim 2^{-\alpha k/2}.$$
As for $\mathcal{K}_2 $, let $\xi(z_1) = \frac{1}{t_1 + |z_1|^{n + \alpha}}, \ \ \eta(z_1) = \frac{\mathbf{1}_{E_2}(z_1)}{t_1 + |z_1-x_1|^{n + \alpha}}.$ By Young's inequality, we have
\begin{align*}
\mathcal{K}_2
&\lesssim \ell(I)^\alpha \bigg[ \int_{\Rn} \bigg(\int_{E_2} \frac{\ell(I)^{\frac{n \lambda_1}{2}-\frac{n}{2}}}{(\ell(I) + |x_1 - z_1|)^{\frac{n \lambda_1}{2}}} \frac{1}{(t_1 + |y_1 - z_1|)^{n + \alpha}} dz_1 \bigg)^2 dy_1 \bigg]^{1/2} \\
&\leq \ell(I)^\alpha \bigg[ \int_{\Rn} \bigg(\int_{E_2} \frac{\ell(I)^{\frac{n}{2}+\alpha}}{(\ell(I) + |x_1 - z_1|)^{n+\alpha}} \frac{1}{(t_1 + |y_1 - z_1|)^{n + \alpha}} dz_1 \bigg)^2 dy_1 \bigg]^{1/2} \\
&= \ell(I)^{\frac{n}{2} + 2 \alpha} \big\| \xi * \eta \big\|_{L^2(\Rn)}
\leq \ell(I)^{\frac{n}{2} + 2 \alpha} \big\| \xi \big\|_{L^2(\Rn)} \big\| \eta \big\|_{L^1(\Rn)} \\
&\lesssim \ell(I)^\alpha \int_{(I^{(k-1)})^c} \frac{\ell(I)^\alpha}{|z_1 - x_1|^{n+\alpha}}dz_1
\lesssim 2^{-\alpha k/2},
\end{align*}
\end{proof}

Thus, by Minkowski's inequality and Lemma $\ref{2-alpha-k-2}$, $\mathcal{G}_{mod,<}$ can be controlled by
\begin{align*}
&\mathcal{G}_{mod,<}\\&\lesssim \sum_{I,J_2:good} \iint_{W_{J_2}} \iint_{W_I} \bigg[\sum_{k=1}^\infty \sum_{J_1:\ell(J_1) < \ell(J_2)}
|f_{I^{(k)} J_1}| \bigg( \iint_{\R^{n+m}} |\theta_{t_1,t_2}(s_I^k \otimes h_{J_1})(x-y)|^2 \\
&\quad\quad\quad \times \Big(\frac{t_1}{t_1 + |y_1|}\Big)^{n \lambda_1} \Big(\frac{t_2}{t_2 + |y_2|}\Big)^{m \lambda_2} \frac{dy_1}{t_1^n} \frac{dy_2}{t_2^m} \bigg)^{1/2} \bigg]^2 \frac{dx_1 dt_1}{t_1} \frac{dx_2 dt_2}{t_2} \\
&\leq \sum_{I}\sum_{J_2} \iint_{W_{J_2}} \iint_{W_I} \bigg[\sum_{k=1}^\infty \frac1{2^{\alpha k/2}} \sum_{J_1:\ell(J_1) < \ell(J_2)}
A_{J_1 J_2} |f_{I^{(k)} J_1}| {(\frac {|J_1|}{|I^{(k)}|})^{1/2}} \bigg]^2 \frac{dx_1 dt_1dx_2 dt_2}{t_1t_2} \\
&\lesssim \sum_{I} \sum_{J_2} \bigg[ \sum_{k=1}^\infty 2^{-\alpha k/2} \Big( \frac{|I|}{|I^{(k)}|} \sum_{J_1:\ell(J_1) < \ell(J_2)} A_{J_1 J_2} |f_{I^{(k)} J_1}| \Big)^{1/2} \bigg]^2 \\
&\leq \bigg[ \sum_{k=1}^\infty 2^{-\alpha k/4} \cdot 2^{-\alpha k/4}\bigg( \sum_I \frac{|I|}{|I^{(k)}|} \sum_{J_2} \Big( \sum_{J_1:\ell(J_1) < \ell(J_2)} A_{J_1 J_2} |f_{I^{(k)} J_1}| \Big)^2 \bigg)^{1/2} \bigg]^2 \\
&\lesssim \sum_{k=1}^\infty 2^{-\alpha k/2} \sum_I \frac{|I|}{|I^{(k)}|} \sum_{J_2} \Big( \sum_{J_1:\ell(J_1) < \ell(J_2)} A_{J_1 J_2} |f_{I^{(k)} J_1}| \Big)^2 \\
&\lesssim \sum_{k=1}^\infty 2^{-\alpha k/2} \sum_{Q,J_1} \frac{|f_{Q J_1}|^2}{|Q|} \sum_{I:I^{(k)=Q}}|I|
\lesssim \big\| f \big\|_{L^2(\R^{n+m})}^2 .
\end{align*}
\qed

\noindent\textbf{$\bullet$ Estimate of $\mathcal{G}_{Car,<}$.} We need to use the following lemma.

\begin{lemma}\label{Car-1}
Let $J_1$, $J_2 \in \mathcal{D}_m$ be cubes, and $(x_2,t_2) \in W_{J_2}$. Then the Carleson condition holds
\begin{align*}
\mathcal{R}(x_2,t_2)&:= \sum_{I' \subset I} \iint_{W_{I'}} \iint_{\R^{n+m}} |\theta_{t_1,t_2}(\mathbf{1} \otimes h_{J_1})(x-y)|^2 \prod_{i=1}^2 \Big(\frac{t_i}{t_i + |y_i|}\Big)^{n \lambda_i}
\frac{dy_1}{t_1^n} \frac{dy_2}{t_2^m} \frac{dx_1 dt_1}{t_1} \\
&\lesssim |I| \bigg( \frac{\ell(J_1)^\beta \ |J_1|^{1/2}}{(\ell(J_2) + d(J_1,J_2))^{m + \beta}}\bigg)^2.
\end{align*}
\end{lemma}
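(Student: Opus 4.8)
The plan is to collapse $\mathcal{R}(x_2,t_2)$ onto a single Carleson box, to exploit the cancellation of $h_{J_1}$ so as to produce the gain $\ell(J_1)^\beta$ rather than $\ell(J_2)^\beta$, and then to feed the result to the Carleson-type hypotheses of Assumption~2. First, since the Whitney regions $\{W_{I'}:I'\in\mathcal{D}_n,\ I'\subseteq I\}$ tile $\widehat{I}=I\times(0,\ell(I))$ up to a null set, applying this in the $(x_1,t_1)$ variables gives
$$ \mathcal{R}(x_2,t_2)\le\iint_{\widehat{I}}\iint_{\R^{n+m}}|\theta_{t_1,t_2}(\mathbf{1}\otimes h_{J_1})(x-y)|^2\Big(\tfrac{t_1}{t_1+|y_1|}\Big)^{n\lambda_1}\Big(\tfrac{t_2}{t_2+|y_2|}\Big)^{m\lambda_2}\,\frac{dy_1\,dy_2}{t_1^n t_2^m}\,\frac{dx_1\,dt_1}{t_1}. $$
Then I would use $\int h_{J_1}=0$: writing $c_{J_1}$ for the centre of $J_1$ and $G(z_2)=\int_{\Rn}\big[K_{t_1,t_2}(x-y,z_1,z_2)-K_{t_1,t_2}(x-y,z_1,c_{J_1})\big]\,dz_1$, one has $\theta_{t_1,t_2}(\mathbf{1}\otimes h_{J_1})(x-y)=\int_{J_1}G(z_2)h_{J_1}(z_2)\,dz_2$, so Cauchy--Schwarz in $z_2$ (with $\|h_{J_1}\|_{L^2}=1$) gives $|\theta_{t_1,t_2}(\mathbf{1}\otimes h_{J_1})(x-y)|^2\le\int_{J_1}|G(z_2)|^2\,dz_2$. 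In the range where the Lemma is applied one has $\ell(J_1)<\ell(J_2)$, which together with $(x_2,t_2)\in W_{J_2}$ forces $|z_2-c_{J_1}|\le\ell(J_1)/2<t_2/2$ for $z_2\in J_1$, so the difference of kernels is legitimately estimated by the H\"older-type hypotheses.

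Now split the inner $z_1$-integral as $\int_{\Rn}=\int_I+\int_{I^c}$, writing $G=G_I+G_{I^c}$ accordingly, so that $\int_{J_1}|G|^2\lesssim\int_{J_1}|G_I|^2+\int_{J_1}|G_{I^c}|^2$. For the first piece, squaring the first inequality of Assumption~2(2) (applied with $z_2'=c_{J_1}$, treating $x_2,y_2$ as frozen parameters) gives immediately
$$ \iint_{\widehat{I}}\int_{\Rn}|G_I(z_2)|^2\Big(\tfrac{t_1}{t_1+|y_1|}\Big)^{n\lambda_1}\frac{dy_1\,dx_1\,dt_1}{t_1^{n+1}}\lesssim|I|\,\frac{\ell(J_1)^{2\beta}}{(t_2+|x_2-y_2-z_2|)^{2(m+\beta)}}. $$
For the second piece, the mixed H\"older and size condition of Assumption~1 bounds the integrand of $G_{I^c}$ by $\dfrac{t_1^\alpha}{(t_1+|x_1-y_1-z_1|)^{n+\alpha}}\cdot\dfrac{\ell(J_1)^\beta}{(t_2+|x_2-y_2-z_2|)^{m+\beta}}$, whence $|G_{I^c}(z_2)|\lesssim\dfrac{\ell(J_1)^\beta}{(t_2+|x_2-y_2-z_2|)^{m+\beta}}\,\Phi_{t_1}(x_1-y_1)$ with $\Phi_{t_1}(w):=\int_{I^c}(t_1+|w-z_1|)^{-n-\alpha}t_1^\alpha\,dz_1$. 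The function $\Phi_{t_1}$ is uniformly bounded and, at height $t_1$, essentially lives in a $t_1$-collar of $\partial I$ with power decay $\Phi_{t_1}(w)\lesssim(t_1/\operatorname{dist}(w,I^c))^\alpha$ off it, so splitting $\widehat{I}$ into this collar and the deep interior yields, after a routine computation using only $\alpha>0$ and $\lambda_1>1$ (the $y_1$-weight absorbing the translation),
$$ \iint_{\widehat{I}}\int_{\Rn}\Phi_{t_1}(x_1-y_1)^2\Big(\tfrac{t_1}{t_1+|y_1|}\Big)^{n\lambda_1}\frac{dy_1\,dx_1\,dt_1}{t_1^{n+1}}\lesssim|I|, $$
so $G_{I^c}$ inherits the same bound as $G_I$. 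I expect this boundary-layer estimate, together with keeping track of which coordinate plays which role while Assumption~2 is invoked, to be the only genuinely delicate point; the rest is bookkeeping.

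Finally, inserting these two bounds into $\mathcal{R}$ and carrying out the remaining integrations in $z_2\in J_1$ and $y_2\in\R^m$ leaves
$$ \mathcal{R}(x_2,t_2)\lesssim|I|\,\ell(J_1)^{2\beta}\int_{J_1}\bigg[\int_{\R^m}\frac{1}{(t_2+|x_2-y_2-z_2|)^{2(m+\beta)}}\Big(\tfrac{t_2}{t_2+|y_2|}\Big)^{m\lambda_2}\frac{dy_2}{t_2^m}\bigg]\,dz_2. $$
Since $(x_2,t_2)\in W_{J_2}$, $\ell(J_1)<\ell(J_2)$ and $z_2\in J_1$, one has $t_2+|x_2-y_2-w_2|\sim t_2+|x_2-y_2-z_2|$ for every $w_2\in J_1$, so the inner integrand is comparable to $|J_1|^{-2}\big(\int_{J_1}(t_2+|x_2-y_2-w_2|)^{-m-\beta}\,dw_2\big)^2$; applying the $m$-parameter analogue of Lemma~\ref{estimate-1} (valid because $0<\beta\le m(\lambda_2-2)/2$) with $I_1=J_1$, $I_2=J_2$ bounds the bracket by $(\ell(J_2)+d(J_1,J_2))^{-2(m+\beta)}$. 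The harmless $dz_2$ integration then produces a factor $|J_1|$ and gives
$$ \mathcal{R}(x_2,t_2)\lesssim|I|\Big(\frac{\ell(J_1)^\beta\,|J_1|^{1/2}}{(\ell(J_2)+d(J_1,J_2))^{m+\beta}}\Big)^2, $$
which is the assertion.
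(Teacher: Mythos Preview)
Your proof is correct and follows the same overall strategy as the paper: collapse the Whitney sum to the Carleson box $\widehat{I}$, exploit the cancellation of $h_{J_1}$ to produce the gain $\ell(J_1)^\beta$, split the $z_1$--integral into a near piece (handled by Assumption~2(2)) and a far piece (handled by the mixed H\"older--size estimate of Assumption~1), and finish with the $m$--dimensional analogue of Lemma~\ref{estimate-1}. The only substantive difference is the location of the spatial cut. You split $\mathbf 1=\mathbf 1_I+\mathbf 1_{I^c}$, which forces a genuine boundary--layer argument for the far piece because $d(x_1,I^c)$ is not bounded below for $x_1\in I$; your computation does go through (using $\Phi_{t_1}\lesssim\min(1,(t_1/d(\cdot,I^c))^\alpha)$ one ends up integrating $1+\log(\ell(I)/d(x_1,\partial I))$ over $I$, which is $\lesssim|I|$), but this is the most delicate step in your write--up. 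The paper instead splits $\mathbf 1=\mathbf 1_{3I}+\mathbf 1_{(3I)^c}$ and enlarges the box to $\widehat{3I}$ for the near term; since $d(x_1,(3I)^c)\gtrsim\ell(I)$ uniformly for $x_1\in I$, the far piece then yields the clean pointwise bound $(t_1/\ell(I))^{2\alpha}$, which integrates over $\widehat{I}$ to $|I|$ without any collar analysis. Your Cauchy--Schwarz in $z_2$ versus the paper's Minkowski is cosmetic. In short: same architecture, but the paper's $3I$ trick buys a noticeably shorter far--piece estimate.
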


\begin{proof}
The first step is to split $$\mathcal{R}(x_2,t_2) \lesssim \mathcal{R}_1(x_2,t_2) + \mathcal{R}_2(x_2,t_2),$$
where
\begin{align*}
\mathcal{R}_1(x_2,t_2) &= \iint_{\widehat{3I}} \iint_{\R^{n+m}} |\theta_{t_1,t_2}(\mathbf{1}_{3I} \otimes h_{J_1})(x-y)|^2\prod_{i=1}^2 \Big(\frac{t_i}{t_i + |y_i|}\Big)^{n \lambda_i}\frac{dy_1 dy_2}{t_1^n t_2^m} \frac{dx_1 dt_1}{t_1},
\end{align*}
and
\begin{align*}
\mathcal{R}_2(x_2,t_2) &= \iint_{\widehat{I}} \iint_{\R^{n+m}} |\theta_{t_1,t_2}(\mathbf{1}_{(3I)^c} \otimes h_{J_1})(x-y)|^2 \prod_{i=1}^2 \Big(\frac{t_i}{t_i + |y_i|}\Big)^{n \lambda_i} \frac{dy_1 dy_2}{t_1^n t_2^m} \frac{dx_1 dt_1}{t_1} \\
&:= \iint_{\widehat{I}} H(x,t) \frac{dx_1 dt_1}{t_1}.
\end{align*}

By the combinations of Carleson and H\"{o}lder conditions and Lemma $\ref{estimate-1}$, it follows that
\begin{align*}
&\mathcal{R}_1(x_2,t_2) \\&\lesssim  \frac {1}{|J_1|} \int_{\R^m} \iint_{\widehat{3I}} \int_{\Rn} \bigg| \int_{J_1} \int_{3I} [K_{t_1,t_2}(x-y,(z_1,z_2)) - K_{t_1,t_2}(x-y,(z_1,z_2 + c_{J_1}))] dz_1 dz_2 \bigg|^2 \\
&\quad\quad \times \Big(\frac{t_1}{t_1 + |y_1|}\Big)^{n \lambda_1} \frac{dy_1}{t_1^n} \frac{dx_1 dt_1}{t_1} \Big(\frac{t_2}{t_2 + |y_2|}\Big)^{m \lambda_2} \frac{dy_2}{t_2^m}  \\
& \frac {1}{|J_1|} \int_{\R^m} \bigg[ \int_{J_1} \bigg( \iint_{\widehat{3I}} \int_{\Rn} \bigg|  \int_{3I} [K_{t_1,t_2}(x-y,(z_1,z_2)) - K_{t_1,t_2}(x-y,(z_1,z_2+ c_{J_1}))] dz_1 \bigg|^2 \\
&\quad\quad \times \Big(\frac{t_1}{t_1 + |y_1|}\Big)^{n \lambda_1} \frac{dy_1}{t_1^n} \frac{dx_1 dt_1}{t_1} \bigg)^{1/2} dz_2 \bigg]^2 \Big(\frac{t_2}{t_2 + |y_2|}\Big)^{m \lambda_2} \frac{dy_2}{t_2^m}  \\
&\lesssim |I| |J_1|^{-1} \int_{\R^m} \bigg( \int_{J_1} \frac{\ell(J_1)^\beta dz_2}{(t_2 + |x_2 - y_2 - z_2|)^{m + \beta}} \bigg)^2 \Big(\frac{t_2}{t_2 + |y_2|}\Big)^{m \lambda_2} \frac{dy_2}{t_2^m}  \\
&\lesssim |I| \bigg( \frac{\ell(J_1)^\beta \ |J_1|^{1/2}}{(\ell(J_2) + d(J_1,J_2))^{m + \beta}}\bigg)^2 .
\end{align*}

The mixed H\"{o}lder and size estimate gives that
\begin{align*}
|\theta_{t_1,t_2}(\mathbf{1}_{(3I)^c} \otimes h_{J_1})(x-y)|
&\lesssim |J_1|^{-1/2} \int_{(3I)^c} \frac{t_1^\alpha}{(t_1 + |x_1 - y_1 - z_1|)^{n + \alpha}} dz_1 \\
&\quad \times \int_{J_1} \frac{\ell(J_1)^\beta}{(t_2 + |x_2 - y_2 - z_2|)^{m + \beta}} dz_2.
\end{align*}
Thus, by the estimates in Lemma $\ref{estimate-1}$ and $(\ref{estimate-2})$, one can deduce that
\begin{align*}
H(x,t)
&\lesssim |J_1|^{-1} \int_{\Rn} \bigg( \int_{(3I)^c} \frac{t_1^\alpha dz_1}{(t_1 + |x_1 - y_1 - z_1|)^{n+\alpha}} \bigg)^2 \Big(\frac{t_1}{t_1 + |y_1|}\Big)^{n \lambda_1} \frac{dy_1}{t_1^n} \\
&\quad\quad\quad \times \int_{\R^m} \bigg(\int_{J_1} \frac{\ell(J_1)^\beta dz_2}{(t_2 + |x_2 - y_2 - z_2|)^{m+\beta}} \bigg)^2 \Big(\frac{t_2}{t_2 + |y_2|}\Big)^{m \lambda_2} \frac{dy_2}{t_2^n}  \\
&\lesssim t_1^{2 \alpha} \ell(I)^{-2 \alpha} \bigg( \frac{\ell(J_1)^\beta \ |J_1|^{1/2}}{(\ell(J_2) + d(J_1,J_2))^{m + \beta}}\bigg)^2.
\end{align*}
Therefore, we obtain
\begin{align*}
&\mathcal{R}_2(x_2,t_2) = \iint_{\widehat{I}} H(x,t) \frac{dx_1 dt_1}{t_1} \\
&\lesssim |I| \ell(I)^{-2 \alpha} \int_{0}^{\ell(I)} t_1^{2 \alpha - 1} dt_1 \cdot
\bigg( \frac{\ell(J_1)^\beta \ |J_1|^{1/2}}{(\ell(J_2) + d(J_1,J_2))^{m + \beta}}\bigg)^2 \\
&\lesssim |I| \bigg( \frac{\ell(J_1)^\beta \ |J_1|^{1/2}}{(\ell(J_2) + d(J_1,J_2))^{m + \beta}}\bigg)^2.
\end{align*}
Thus, we finish the proof of Lemma $\ref{Car-1}$.

\end{proof}

Now we give the estimate for $\mathcal{G}_{Car,<}$. If $\ell(J_1) < \ell(J_2)$, then we have
$$\mathcal{R}(x_2,t_2) \lesssim |I|(A_{J_1,J_2} |J_2|^{-1/2})^2.$$
Therefore, we obtain the following estimate
\begin{align*}
\mathcal{G}_{Car,<}
&= \sum_{J_2: good} \iint_{W_{J_2}} \sum_{I:good} \iint_{W_{I}} \iint_{\R^{n+m}} \Big| \sum_{J_1:\ell(J_1)<\ell(J_2)}
\langle f_{J_1} \rangle_{I} \theta_{t_1,t_2}(\mathbf{1} \otimes h_{J_1})(x-y) \Big|^2  \\
&\quad\quad \times \Big(\frac{t_1}{t_1 + |y_1|}\Big)^{n \lambda_1} \Big(\frac{t_2}{t_2 + |y_2|}\Big)^{m \lambda_2} \frac{dy_1}{t_1^n} \frac{dy_2}{t_2^m} \frac{dx_1 dt_1}{t_1} \frac{dx_2 dt_2}{t_2} \\
&\leq \sum_{J_2} \iint_{W_{J_2}} \sum_I \bigg[ \sum_{J_1:\ell(J_1)<\ell(J_2)} \bigg(\iint_{W_{I}} \iint_{\R^{n+m}} \Big|
\langle f_{J_1} \rangle_{I} \theta_{t_1,t_2}(\mathbf{1} \otimes h_{J_1})(x-y) \Big|^2  \\
&\quad\quad \times \Big(\frac{t_1}{t_1 + |y_1|}\Big)^{n \lambda_1} \Big(\frac{t_2}{t_2 + |y_2|}\Big)^{m \lambda_2} \frac{dy_1}{t_1^n} \frac{dy_2}{t_2^m} \frac{dx_1 dt_1}{t_1} \bigg)^{1/2} \bigg]^2 \frac{dx_2 dt_2}{t_2} \\
&\leq \sum_{J_2} \iint_{W_{J_2}} \bigg[ \sum_{J_1:\ell(J_1)<\ell(J_2)} \bigg( \sum_I |\langle f_{J_1} \rangle_{I}|^2 \iint_{W_{I}} \iint_{\R^{n+m}} \Big| \theta_{t_1,t_2}(\mathbf{1} \otimes h_{J_1})(x-y) \Big|^2  \\
&\quad\quad \times \Big(\frac{t_1}{t_1 + |y_1|}\Big)^{n \lambda_1} \Big(\frac{t_2}{t_2 + |y_2|}\Big)^{m \lambda_2} \frac{dy_1}{t_1^n} \frac{dy_2}{t_2^m} \frac{dx_1 dt_1}{t_1}\bigg)^{1/2}\bigg]^2 \frac{dx_2 dt_2}{t_2}\\
&\lesssim \sum_{J_2} \Big[ \sum_{J_1:\ell(J_1) < \ell(J_2)} A_{J_1 J_2} \big\| f_{J_1} \big\|_{L^2(\Rn)} \Big]^2
\lesssim \sum_{J_1} \big\| f_{J_1} \big\|_{L^2(\Rn)}^2 \lesssim \big\| f \big\|_{L^2(\R^{n+m})}^2.
\end{align*}
So far, we have completed the estimate of $\mathcal{G}_{\geq,<}$.

As for the term $\mathcal{G}_{<,\geq}$, it is completely symmetric with the term $\mathcal{G}_{\geq,<}$. It is worth noting that the mixed H\"{o}lder and size estimate and the combination of Carleson and H\"{o}lder estimate are symmetric, respectively. Thus the estimate for $\mathcal{G}_{<,\geq}$ is also true and we here omit its proof.

\section{The Case : $\ell(I_1) \geq \ell(I_2)$ and $\ell(J_1) \geq \ell(J_2)$.}
Similar as what we have done before, the summation $\ell(I_1) \geq \ell(I_2)$ was decomposed into the separated, nested and adjacent terms.
A similar splitting in the summation $\ell(J_1) \geq \ell(J_2)$ is also performed. This splits the whole summation into nine parts as follows.
\begin{align*}
\mathcal{G}_{\geq,\geq}
&\lesssim \mathcal{G}_{sep,sep} + \mathcal{G}_{sep,nes} + \mathcal{G}_{sep,adj} + \mathcal{G}_{nes,sep} + \mathcal{G}_{nes,nes} \\
&\quad  + \mathcal{G}_{nes,adj} + \mathcal{G}_{adj,sep} + \mathcal{G}_{adj,nes} + \mathcal{G}_{adj,adj}.
\end{align*}

\subsection{Nested/Nested : $\mathcal{G}_{nes,nes}$.}
We begin with the term $\mathcal{G}_{nes,nes}$, where the new bi-parameter phenomena will appear. Note that although this is only one of the many cases one needs to discuss in order to obtain a full estimate for $\mathcal{G}_{\geq,\geq}$ term, all the main difficulties in other cases are in fact already embedded in Nested/Nested. The fact will become more and more clear throughout the proof. Similarly, for the singular integral operators including bi-parameter and multi-parameter cases,
the Nested part is also the most difficult one. Because it involves in some paraproduct estimates and all the BMO type estimates.

The decomposition of $h_{I^{(k)}}$ in $(\ref{h-I-k})$ gives that
$$\mathcal{G}_{nes,nes}
\lesssim \mathcal{G}_{mod,mod} + \mathcal{G}_{Car,Car} + \mathcal{G}_{mod,Car} + \mathcal{G}_{Car,mod} \ ,$$
where
\begin{align*}
\mathcal{G}_{mod,mod}
&= \sum_{I,J: good} \iint_{W_{J}} \iint_{W_{I}} \iint_{\R^{n+m}} \Big| \sum_{k=1}^\infty \sum_{i=1}^\infty
f_{I^{(k)} J^{(i)}} \theta_{t_1,t_2}(s_I^k \otimes s_J^{i})(x-y) \Big|^2 \\
&\quad\quad\quad\quad  \times \Big(\frac{t_1}{t_1 + |y_1|}\Big)^{n \lambda_1} \Big(\frac{t_2}{t_2 + |y_2|}\Big)^{m \lambda_2} \frac{dy_1}{t_1^n} \frac{dy_2}{t_2^m} \frac{dx_1 dt_1}{t_1} \frac{dx_2 dt_2}{t_2},
\end{align*}
\begin{align*}
\mathcal{G}_{mod,Car}
&= \sum_{I,J: good} \iint_{W_{J}} \iint_{W_{I}} \iint_{\R^{n+m}} \Big| \sum_{k=1}^\infty
\langle f_{I^{(k)}} \rangle_J \theta_{t_1,t_2}(s_I^k \otimes \mathbf{1})(x-y) \Big|^2 \\
&\quad\quad\quad\quad  \times \Big(\frac{t_1}{t_1 + |y_1|}\Big)^{n \lambda_1} \Big(\frac{t_2}{t_2 + |y_2|}\Big)^{m \lambda_2} \frac{dy_1}{t_1^n} \frac{dy_2}{t_2^m} \frac{dx_1 dt_1}{t_1} \frac{dx_2 dt_2}{t_2},
\end{align*}
\begin{align*}
\mathcal{G}_{Car,mod}
&= \sum_{I,J: good} \iint_{W_{J}} \iint_{W_{I}} \iint_{\R^{n+m}} \Big| \sum_{\ell=1}^\infty
\langle f_{J^{(i)}} \rangle_I \theta_{t_1,t_2}(\mathbf{1} \otimes s_J^i)(x-y) \Big|^2 \\
&\quad\quad\quad\quad  \times \Big(\frac{t_1}{t_1 + |y_1|}\Big)^{n \lambda_1} \Big(\frac{t_2}{t_2 + |y_2|}\Big)^{m \lambda_2} \frac{dy_1}{t_1^n} \frac{dy_2}{t_2^m} \frac{dx_1 dt_1}{t_1} \frac{dx_2 dt_2}{t_2},
\end{align*}
and
\begin{align*}
\mathcal{G}_{Car,Car}
&= \sum_{I,J: good} |\langle f \rangle_{I \times J}|^2 \iint_{W_{J}} \iint_{W_{I}} \iint_{\R^{n+m}} | \theta_{t_1,t_2} (\mathbf{1})(x-y) \Big|^2 \\
&\quad\quad \times \Big(\frac{t_1}{t_1 + |y_1|}\Big)^{n \lambda_1} \Big(\frac{t_2}{t_2 + |y_2|}\Big)^{m \lambda_2} \frac{dy_1}{t_1^n} \frac{dy_2}{t_2^m} \frac{dx_1 dt_1}{t_1} \frac{dx_2 dt_2}{t_2}.
\end{align*}
\subsubsection{\bf{Estimate of} $\mathcal{G}_{mod,mod}$.}
We proceed using the standard argument as in Lemma $\ref{2-alpha-k-2}$. The size condition and $(\ref{estimate-2})$ lead to the bound
\begin{align*}
\bigg( \iint_{\R^{n+m}} |\theta_{t_1,t_2}(s_I^k \otimes s_J^{i})(x-y)|^2
\Big(\frac{t_1}{t_1 + |y_1|}\Big)^{n \lambda_1}& \Big(\frac{t_2}{t_2 + |y_2|}\Big)^{m \lambda_2} \frac{dy_1}{t_1^n}
\frac{dy_2}{t_2^m} \bigg)^{1/2} \\
&\lesssim 2^{-\alpha k/2} |I^{(k)}|^{-1/2} \cdot 2^{-\beta i} |J^{(i)}|^{-1/2}.
\end{align*}
It is similar to estimate $\mathcal{G}_{mod,<}$ to analyze $\mathcal{G}_{mod,mod}$.
\begin{align*}
\mathcal{G}_{mod,mod}
&\lesssim \sum_{k,i} 2^{-\alpha k/2} 2^{-\beta i} \sum_{Q,R}|f_{QR}|^2 \frac{1}{|Q|} \sum_{I:I^{(k)}=Q} |I| \cdot \frac{1}{|R|} \sum_{J:J^{(i)}=R} |J| \\
&\lesssim \big\| f \big\|_{L^2(\R^{n+m})}^2 .
\end{align*}
\subsubsection{\bf{Estimate of} $\mathcal{G}_{Car,Car}$.}
Applying the bi-parameter Carleson condition, it immediately yields that
\begin{align*}
\mathcal{G}_{Car,Car}
&=\sum_{I,J}|\langle f \rangle_{I \times J}|^2 C_{I J}^{\mathcal{D}}
= 2 \int_{0}^{\infty} \sum_{\substack{I,J \\ |\langle f \rangle_{I \times J}| > t}} C_{I J}^{\mathcal{D}} t \ dt \\
&\lesssim \int_{0}^{\infty} \sum_{\substack{I,J \\ I \times J \subset \{M_{\mathcal{D}} f > t\}}} C_{I J}^{\mathcal{D}} t \ dt
\lesssim \int_{0}^{\infty} |\{M_{\mathcal{D}} f > t\}| t \ dt \\
&\lesssim \big\| M_{\mathcal{D}} f \big\|_{L^2(\R^{n+m})}^2
\lesssim \big\| f \big\|_{L^2(\R^{n+m})}^2,
\end{align*}
where in the last step we have used the $L^p(1<p<\infty)$ boundedness of the strong maximal function associated with rectangles.
\subsubsection{\bf{Estimate of} $\mathcal{G}_{Car,mod}$ and $\mathcal{G}_{mod,Car}$.}

\begin{lemma}\label{Car-2}
Let $J \in \mathcal{D}_{m,good}$, $(x_2,t_2) \in W_{J_2}$ and $i \in \N$ be fixed. Then the Carleson condition is satisfied
\begin{align*}
\sum_{I' \subset I} \iint_{W_{I'}} \iint_{\R^{n+m}} |\theta_{t_1,t_2}(\mathbf{1} \otimes s_J^{i})(x-y)|^2 \Big(\frac{t_1}{t_1 + |y_1|}\Big)^{n \lambda_1} &\Big(\frac{t_2}{t_2 + |y_2|}\Big)^{m \lambda_2} \frac{dy_1}{t_1^n} \frac{dy_2}{t_2^m} \frac{dx_1 dt_1}{t_1} \\
&\quad \lesssim 2^{-\beta i}|I| \cdot |J^{(i)}|^{-1}.
\end{align*}
\end{lemma}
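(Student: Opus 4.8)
The plan is to transcribe the proof of Lemma \ref{Car-1}, the sole structural difference being that the second tensor factor is now the non-cancellative bump $s_J^i$ rather than a genuine Haar function $h_{J_1}$. Consequently, wherever Lemma \ref{Car-1} used the H\"older cancellation of $h_{J_1}$ (via the combination of Carleson and H\"older conditions), we instead use the size-type inputs; the decay that cancellation previously supplied is now supplied by the goodness of $J$, which for $i>r$ forces $d(J,(J^{(i-1)})^c)\gtrsim 2^{i/2}\ell(J)$, exactly the mechanism behind $(\ref{estimate-2})$. Throughout I take $(x_2,t_2)\in W_J$, so that $t_2\thicksim\ell(J)$, and $I\in\mathcal{D}_n$ an arbitrary cube. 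Writing $\mathbf 1=\mathbf 1_{3I}+\mathbf 1_{(3I)^c}$ in the first variable and using $\bigcup_{I'\subset I}W_{I'}=\widehat I\subset\widehat{3I}$, the left-hand side is dominated by $\mathcal R_1(x_2,t_2)+\mathcal R_2(x_2,t_2)$, where $\mathcal R_1$ integrates $|\theta_{t_1,t_2}(\mathbf 1_{3I}\otimes s_J^i)(x-y)|^2$ against the two weights over $\widehat{3I}\times\R^{n+m}$, and $\mathcal R_2$ is the same with $\mathbf 1_{(3I)^c}$ and the region $\widehat I$.

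For the near term $\mathcal R_1$, since $s_J^i$ has no cancellation I bound $|s_J^i|\le|J^{(i)}|^{-1/2}$ and use Minkowski's inequality to pull the $z_2$-integral over $(J^{(i-1)})^c$ outside the $L^2$-norm; it then suffices, for each fixed $z_2$, to control the inner $(x_1,t_1,y_1)$-integral $\iint_{\widehat{3I}}\int_{\Rn}\big|\int_{3I}K_{t_1,t_2}(x-y,z_1,z_2)\,dz_1\big|^2(\frac{t_1}{t_1+|y_1|})^{n\lambda_1}\frac{dy_1}{t_1^n}\frac{dx_1\,dt_1}{t_1}$ — which the combination of Carleson and size conditions (Assumption 2(1), first inequality, applied with the cube $3I$) bounds by $|I|\,t_2^{2\beta}(t_2+|x_2-y_2-z_2|)^{-2(m+\beta)}$ — and then to integrate the outcome in $y_2$ against its weight and in $z_2$ over $(J^{(i-1)})^c$. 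Invoking the $\R^m$-analogue of $(\ref{estimate-2})$ (with $\alpha,n,k,I$ replaced by $\beta,m,i,J$) gives $\mathcal R_1\lesssim 2^{-\beta i}|I|\,|J^{(i)}|^{-1}$.

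For the far term $\mathcal R_2$ neither factor is cancellative, so the plain size condition (Assumption 1(1)) yields
\[
|\theta_{t_1,t_2}(\mathbf 1_{(3I)^c}\otimes s_J^i)(x-y)|\lesssim|J^{(i)}|^{-1/2}\Big(\int_{(3I)^c}\frac{t_1^\alpha\,dz_1}{(t_1+|x_1-y_1-z_1|)^{n+\alpha}}\Big)\Big(\int_{(J^{(i-1)})^c}\frac{t_2^\beta\,dz_2}{(t_2+|x_2-y_2-z_2|)^{m+\beta}}\Big).
\]
After integrating the first bracket in $y_1$ against its weight and then over $(x_1,t_1)\in\widehat I$, one gets a factor $|I|\,t_1^{2\alpha}\ell(I)^{-2\alpha}$ with $\int_0^{\ell(I)}t_1^{2\alpha-1}\,dt_1\thicksim\ell(I)^{2\alpha}$ — exactly the $\mathcal R_2$-step of Lemma \ref{Car-1}; integrating the second bracket in $y_2$ against its weight produces $2^{-\beta i/2}$ by the same $\R^m$-analogue of $(\ref{estimate-2})$. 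Squaring and multiplying gives $\mathcal R_2\lesssim 2^{-\beta i}|I|\,|J^{(i)}|^{-1}$, and adding the two contributions completes the proof.

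The only genuinely new ingredient is the $\R^m$-version of $(\ref{estimate-2})$: for $(x_2,t_2)\in W_J$ with $J$ good, the quantity $\big[\int_{\R^m}\big(\int_{(J^{(i-1)})^c}t_2^\beta(t_2+|x_2-y_2-z_2|)^{-(m+\beta)}\,dz_2\big)^2(\frac{t_2}{t_2+|y_2|})^{m\lambda_2}\frac{dy_2}{t_2^m}\big]^{1/2}$ is $\thicksim 2^{-\beta i/2}$. This is the main point, and it is proved exactly as $(\ref{estimate-2})$: for $i\le r$ it is trivial since $2^{-\beta i}\thicksim 1$, while for $i>r$ one uses $d(J,(J^{(i-1)})^c)\gtrsim 2^{i/2}\ell(J)$ and splits the $z_2$-domain according to whether $|z_2-x_2|\ge 2|y_2|$ or not, precisely as in the proof of Lemma \ref{2-alpha-k-2}; the hypothesis $\beta\le m(\lambda_2-2)/2$ is used exactly to absorb the residual power of $t_2$ against $|z_2-x_2|\gtrsim t_2$. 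Everything else is a line-by-line copy of Lemma \ref{Car-1}.
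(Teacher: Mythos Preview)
Your proposal is correct and follows essentially the same route as the paper: the paper's own argument is the one-line sketch ``similar to Lemma~\ref{Car-1}; the size condition and mixed Carleson and size estimate are used, and the inequality $(\ref{estimate-2})$ is used twice,'' which you have faithfully unpacked --- splitting $\mathbf 1=\mathbf 1_{3I}+\mathbf 1_{(3I)^c}$, applying Assumption~2(1) on the near piece and Assumption~1(1) on the far piece, and invoking the $\R^m$-analogue of $(\ref{estimate-2})$ (via the goodness of $J$) to produce the $2^{-\beta i}$ decay in both pieces.
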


The proof of Lemma $\ref{Car-2}$ is similar to Lemma $\ref{Car-1}$. The size condition and mixed Carleson and size estimate are used. In addition, the inequality $(\ref{estimate-2})$ is used twice.

\qed

Therefore, $\mathcal{G}_{Car,mod}$ is bounded as below.
\begin{align*}
\mathcal{G}_{Car,mod}
&\leq \sum_{J:good} \iint_{W_{J}} \sum_{I} \bigg[\sum_{i=1}^\infty |\langle f_{J^{(i)}}\rangle_I| \bigg( \iint_{W_I} \iint_{\R^{n+m}} |\theta_{t_1,t_2}(\mathbf{1} \otimes s_J^{i})(x-y)|^2 \\
&\quad\quad \times \Big(\frac{t_1}{t_1 + |y_1|}\Big)^{n \lambda_1} \Big(\frac{t_2}{t_2 + |y_2|}\Big)^{m \lambda_2} \frac{dy_1}{t_1^n} \frac{dy_2}{t_2^m} \frac{dx_1 dt_1}{t_1} \bigg)^{1/2} \bigg]^2 \frac{dx_2 dt_2}{t_2} \\
&\lesssim \sum_{i=1}^\infty 2^{-\beta i/2} \sum_{R} \big\| f_R \big\|_{L^2(\Rn)}^2 \frac{1}{|R|} \sum_{J:J^{(i)=R}}|I|
\lesssim \big\| f \big\|_{L^2(\R^{n+m})}^2 .
\end{align*}
\subsection{The rest of terms.}
As for the estimates of the remaining terms, they are simply combinations of the techniques we have used above. Thereby, we here only present certain key points.

When reviewing the above proof, one will realize that the central part is to dominate $\mathcal{P}(x,t)$, $\mathcal{Q}(x,t)$ and $\mathcal{R}(x_2,t_2)$. So do the rest of terms. Moreover, the initial estimates of $\mathcal{P}$, $\mathcal{Q}$ and $\mathcal{R}$ are retained in the inequality $(\ref{iint-estimate})$, Lemma $\ref{2-alpha-k-2}$ and Lemma $\ref{Car-1}$ respectively. They do not involve the relationship of side length of cubes $I_1$, $I_2$, $J_1$ and $J_2$. Thus, based on
the inequality $(\ref{iint-estimate})$, Lemma $\ref{2-alpha-k-2}$ and Lemma $\ref{Car-1}$, one only needs to add the corresponding the relationship of side length.

Consequently, using the size condition or the mixed H\"{o}lder and size condition, it yields the bounds for $\mathcal{G}_{sep,sep}$, $\mathcal{G}_{sep,adj}$, $\mathcal{G}_{adj,adj}$ and $\mathcal{G}_{adj,sep}$ directly. Finally, for the terms $\mathcal{G}_{nes,sep}$ and $\mathcal{G}_{nes,adj}$, $nes$ is split into $mod$ and $Car$. Applying the size condition and the combinations of Carleson and size estimate, we will bound them. The terms $\mathcal{G}_{sep,nes}$ and $\mathcal{G}_{adj,nes}$ are symmetric with respect to them respectively.

\qed
\section{The Necessity of Bi-parameter Carleson Condition}\label{Sec-necessity}
We here show that the bi-parameter Carleson condition is necessary for $g_{\lambda_1,\lambda_2}^*$-function to be bounded on $L^2(\R^{n+m})$.

Suppose that $\theta_{t_1,t_2}=\theta_{t_1}^n \otimes \theta_{t_2}^m$ is bounded on $L^2(\R^{n+m})$, where $\theta_{t_1}^n$ has a kernel $s^n_{t_1}(x_1, y_1)$, $\theta^n_{t_2}$ has a kernel $s^m_{t_2}(x_2, y_2)$, $x_1, y_1 \in \Rn$, $x_2, y_2\in \R^m$, $t_1, t_2 > 0$. We assume that these satisfy the size condition and the corresponding $L^2$ bounds in $\Rn$ and $\R^m$. We shall show that the bi-parameter Carleson condition $(\ref{Car-condition})$ holds.

Define $\widetilde{\Omega}=\{ M_{\mathcal{D}}\mathbf{1}_{\Omega} > 1/2 \}$ and
$\widehat{\Omega}= \{ M \mathbf{1}_{\widetilde{\Omega}} > c \}$ for a small enough dimensional constant $c =c(n, m)$, where
$M_{\mathcal{D}}$ denote the strong maximal function related to the grid $\mathcal{D}$ and $M$ denote the strong maximal function. From the endpoint estimates for $M$ and $M_{\mathcal{D}}$, it follows that
$|\widehat{\Omega}| \lesssim |\widetilde{\Omega}| \lesssim |\Omega|$. Hence, it is enough to show that
\begin{align*}
\sum_{\substack{I \times J \in \mathcal{D} \\ I \times J \subset \Omega}}
&\iint_{W_J} \iint_{W_I} \iint_{\R^{n+m}}|\theta_{t_1,t_2} \mathbf{1}_{\widehat{\Omega}^c}(y_1, y_2)|^2
\Big(\frac{t_1}{t_1 + |x_1 - y_1|}\Big)^{n \lambda_1} \\
&\quad \times \Big(\frac{t_2}{t_2 + |x_2 - y_2|}\Big)^{m \lambda_2} \frac{dy_1}{t_1^n}
\frac{dy_2}{t_2^m} \frac{dx_1 dt_1}{t_1} \frac{dx_2 dt_2}{t_2}
\lesssim |\Omega| .
\end{align*}

For every $J \in \mathcal{D}_m$ we let $\mathcal{F}_J$ consist of the maximal $F \in \mathcal{D}_n$ for which
$F \times J \subset \widetilde{\Omega}$. Then we define $F_J:= \bigcup_{F \in \mathcal{F}_J} 2F$. Moreover, for fixed $I \in \mathcal{D}_n$, let $\mathcal{G}_I$ be the family of the maximal
$G \in \mathcal{D}_m$ for which $I \times G \subset \Omega$, and $I_G \in \mathcal{D}_n$ be the maximal cube for which $I_G \supset I$ and $I_G \times G \subset \widetilde{\Omega}$. So, we only need to show the following inequalities.
\begin{align*}
\mathcal{G}_1
&=\sum_{\substack{I \times J \in \mathcal{D} \\ I \times J \subset \Omega}}
\iint_{W_J} \iint_{W_I} \iint_{\R^{n+m}}|\theta_{t_1,t_2} (\mathbf{1}_{\widehat{\Omega}^c} \mathbf{1}_{F_J})(y_1, y_2)|^2
\Big(\frac{t_1}{t_1 + |x_1 - y_1|}\Big)^{n \lambda_1} \\
&\quad\quad \times \Big(\frac{t_2}{t_2 + |x_2 - y_2|}\Big)^{m \lambda_2} \frac{dy_1}{t_1^n}
\frac{dy_2}{t_2^m} \frac{dx_1 dt_1}{t_1} \frac{dx_2 dt_2}{t_2} \\
&:= \sum_{J} \iint_{W_J} \mathcal{G}_J(x_2,t_2)\frac{dx_2 dt_2}{t_2}
\lesssim |\Omega|,
\end{align*}
and

\begin{align*}
\mathcal{G}_2
&=\sum_{\substack{I \times J \in \mathcal{D} \\ I \times J \subset \Omega}}
\iint_{W_J} \iint_{W_I} \iint_{\R^{n+m}}|\theta_{t_1,t_2} (\mathbf{1}_{\widehat{\Omega}^c} \mathbf{1}_{F_J^c})(y_1, y_2)|^2
\Big(\frac{t_1}{t_1 + |x_1 - y_1|}\Big)^{n \lambda_1} \\
&\quad\quad \times \Big(\frac{t_2}{t_2 + |x_2 - y_2|}\Big)^{m \lambda_2} \frac{dy_1}{t_1^n}
\frac{dy_2}{t_2^m} \frac{dx_1 dt_1}{t_1} \frac{dx_2 dt_2}{t_2}\\
&:= \sum_{I} \iint_{W_I} \mathcal{G}_I(x_1,t_1)\frac{dx_1 dt_1}{t_1}
\lesssim |\Omega|.
\end{align*}
To attain the goal, we need to first bound $\mathcal{G}_J(x_2,t_2)$ and $\mathcal{G}_I(x_1,t_1)$. Actually, Minkowski's integral inequlity and size estimate yield that

\begin{align*}
\mathcal{G}_J(x_2,t_2)
&\lesssim \bigg[ \int_{\R^m} \bigg( \iint_{\R^{n+1}_{+}} \iint_{\R^{n+m}} |K_{t_2}^m(y_2,z_2)|^2 |\theta_{t_1,t_2} (\mathbf{1}_{\widehat{\Omega}^c} \mathbf{1}_{F_J})(y_1)|^2 \Big(\frac{t_1}{t_1 + |x_1 - y_1|}\Big)^{n \lambda_1} \\
&\quad\quad\quad\quad \times \Big(\frac{t_2}{t_2 + |x_2 - y_2|}\Big)^{m \lambda_2} \frac{dy_1}{t_1^n}
\frac{dy_2}{t_2^m} \frac{dx_1 dt_1}{t_1} \bigg)^{1/2} dz_2 \bigg]^2\\
&\lesssim \bigg[\int_{\R^m} \bigg( \int_{\R^m} \Big( \frac{t_2^\beta}{(t_2 + |y_2 - z_2|)^{m+\beta}} \Big)^2 \Big(\frac{t_2}{t_2 + |x_2 - y_2|}\Big)^{m \lambda_2} \frac{dy_2}{t_2^m} \bigg)^{1/2}\\&\quad\quad\times
\big\| \mathbf{1}_{\widehat{\Omega}^c}(\cdot,z_2) \mathbf{1}_{F_J} \big\|_{L^2(\Rn)} dz_2 \bigg]^2 \\
&\lesssim \bigg[\int_{\R^m}  \frac{\ell(J)^\beta}{(\ell(J)+|x_2 - z_2|)^{m+\beta}} \big\| \mathbf{1}_{\widehat{\Omega}^c}(\cdot,z_2) \mathbf{1}_{F_J} \big\|_{L^2(\Rn)} dz_2 \bigg]^2 \\
&\lesssim \int_{\R^m}  \frac{\ell(J)^\beta}{(\ell(J)+|x_2 - z_2|)^{m+\beta}} \big\| \mathbf{1}_{\widehat{\Omega}^c}(\cdot,z_2) \mathbf{1}_{F_J} \big\|_{L^2(\Rn)}^2 dz_2 \\
&\lesssim \int_{\R^m}  \frac{\ell(J)^\beta}{|z_2 - c_J|^{m+\beta}} \big\| \mathbf{1}_{\widehat{\Omega}^c}(\cdot,z_2) \mathbf{1}_{F_J} \big\|_{L^2(\Rn)}^2 dz_2 \\
&=\int_{\Rn} \mathbf{1}_{F_J}(z_1) \int_{\R^m} \frac{\ell(J)^\beta}{|z_2 - c_J|^{m+\beta}} \mathbf{1}_{\widehat{\Omega}^c}(z_1,z_2) dz_2 \ dz_1.
\end{align*}
Similarly, we may estimate
\begin{align*}
\mathcal{G}_I(x_1,t_1)
&=\sum_{G \in \mathcal{G}_I} \sum_{J:J \subset G} \iint_{W_J} \iint_{\R^{n+m}} |\theta_{t_1,t_2} (\mathbf{1}_{\widehat{\Omega}^c} \mathbf{1}_{F_J^c})(y_1,y_2)|^2
\Big(\frac{t_1}{t_1 + |x_1 - y_1|}\Big)^{n \lambda_1} \\
&\quad\quad \times \Big(\frac{t_2}{t_2 + |x_2 - y_2|}\Big)^{m \lambda_2} \frac{dy_1}{t_1^n}
\frac{dy_2}{t_2^m} \frac{dx_2 dt_2}{t_2} \\
&\lesssim \bigg[\int_{\R^n}  \frac{\ell(I)^\alpha}{(\ell(I)+|x_1 - z_1|)^{n+\alpha}}
\Big( \sum_{G \in \mathcal{G}_I} \mathbf{1}_{(2 I_G)^c}(z_1) |G| \Big)^{1/2} dz_1 \bigg]^2 \\
&\lesssim \sum_{G \in \mathcal{G}_I} |G| \int_{\R^n}  \frac{\ell(I)^\alpha}{(\ell(I)+|x_1 - z_1|)^{n+\alpha}}
\mathbf{1}_{(2 I_G)^c}(z_1) dz_1 \\
\end{align*}
\begin{align*}
&\lesssim \sum_{G \in \mathcal{G}_I} |G| \int_{I_G^c} \frac{\ell(I)^\alpha}{|z_1 - c_{I_G}|^{n+\alpha}} dz_1
\lesssim \sum_{G \in \mathcal{G}_I} |G| \Big( \frac{\ell(I)}{\ell(I_G)} \Big)^\alpha.
\end{align*}
The remaining calculation is a routine application of the idea of \cite{M2014}. We here omit the details. Finally, we obtain
\begin{align*}
\mathcal{G}_1 = \sum_{J} \iint_{W_J} \mathcal{G}_J(x_2,t_2)\frac{dx_2 dt_2}{t_2} \lesssim |\Omega|, \ \ \
\mathcal{G}_2 = \sum_{I} \iint_{W_I} \mathcal{G}_I(x_1,t_1)\frac{dx_1 dt_1}{t_1} \lesssim |\Omega|.
\end{align*}
Thus, we have proved the necessity.
\qed


\end{document}